\newtheorem{theorem}{Theorem}[section]
\newtheorem{lemma}[theorem]{Lemma}
\newtheorem{proposition}[theorem]{Proposition}
\newtheorem{question}[theorem]{Question}
\theoremstyle{corollary}
\newtheorem{corollary}[theorem]{Corollary}
\theoremstyle{definition} 
\newtheorem{definition-lemma}[theorem]{Definition-Lemma}
\newtheorem{example}[theorem]{Example}
\theoremstyle{remark}
\newtheorem{remark}[theorem]{Remark}
\numberwithin{equation}{section}
\newcommand{\C}{\mathbb{C}}
\newcommand{\R}{\mathbb{R}}
\newcommand{\Z}{\mathbb{Z}}
\newcommand{\Q}{\mathbb{Q}}
\newcommand{\mc}{\mathcal}
\DeclareMathOperator{\ord}{ord}
\DeclareMathOperator{\vol}{vol}
\DeclareMathOperator{\SB}{SB}
\newcommand{\bm}{\mathbf B_-}  
\newcommand{\bp}{\mathbf B_+}  
\newcommand{\okbd}{\Delta}
\newcommand{\oklim}{\Delta^{\lim}}
\def\res{\operatorname{res}}
\def\zar{\operatorname{Zar}}
\def\BDPP{\operatorname{BDPP}}
\def\bir{\operatorname{bir}}
\def\Proj{\operatorname{Proj}}
\def\Supp{\operatorname{Supp}}
\newcommand{\vleq}{\rotatebox[origin=c]{90}{$\leq$}}
\title[Comparing numerical Iitaka dimensions again]
{Comparing numerical Iitaka dimensions again}
\begin{document}

\author{Sung Rak Choi}
\address{Department of Mathematics, Yonsei University, 50 Yonsei-ro, Seodaemun-gu, Seoul 03722, Republic of Korea}
\email{sungrakc@yonsei.ac.kr}

\author{Jinhyung Park}
\address{Department of Mathematics, Sogang University, 35 Baekbeom-ro, Mapo-gu, Seoul 04107, Republic of Korea}
\email{parkjh13@sogang.ac.kr}

\date{\today}
\keywords{numerical Iitaka dimensions, positive intersection product, volume of a divisor, Zariski decomposition, Okounkov body, abundant divisor}

\begin{abstract}
To seek for the useful numerical analogues to the Iitaka dimension, various numerical Iitaka dimensions have been defined from a number of different perspectives. It has been accepted that all the known numerical Iitaka dimensions coincide with each other until the recent discovery of a counterexample constructed by Lesieutre. In this paper, we prove that many of them still coincide with the numerical Iitaka dimension introduced by Boucksom--Demailly--P\u{a}un--Peternell. On the other hand, we show that some other numerical Iitaka dimensions introduced by Nakayama and Lehmann can be arbitrarily larger than the rest of numerical Iitaka dimensions. We also study some properties of abundant divisors.
\end{abstract}

\maketitle

\section{Introduction}

Throughout the paper, we work over the field $\C$ of complex numbers. Let $X$ be a $\Q$-factorial normal projective variety of dimension $n$, and $D$ be a divisor on $X$. By a divisor on $X$, we mean an $\R$-divisor on $X$. If $D$ is effective, then a nonnegative integer $\kappa(D)$, called the \emph{Iitaka dimension} of $D$, can be uniquely defined and it is known that there exist constants $C_1, C_2 > 0$ for which
\begin{equation}\label{eq:iitakadim}
C_1 m^{\kappa(D)} < h^0(X, \lfloor mD \rfloor) < C_2 m^{\kappa(D)}
\end{equation}
holds for every sufficiently large integer $m>0$. The Iitaka dimension is a \emph{birational invariant} in the sense that if $f \colon Y \to X$ is a birational morphism from a smooth projective variety $Y$, then $\kappa(f^*D)=\kappa(D)$. However, it is well known that $\kappa(D)$ is not a numerical invariant.
\medskip

There are several numerical analogues to the Iitaka dimension defined for a pseudoeffective divisor $D$.
Boucksom--Demailly--P\u{a}un--Peternell \cite{BDPP} defined the following numerical Iitaka dimension:
$$
\nu_{\BDPP}(D):= \max \{ k \in \Z_{\geq 0} \mid \langle D^k \rangle \neq 0 \}
$$
where $\langle - \rangle$ denotes the positive intersection product. If $D$ is nef, then $\langle D^k \rangle$ is the usual self-intersection number $[D^k]$ so that $\nu_{\BDPP}(D)$ is nothing but Kawamata's numerical dimension $\nu(D)$ in \cite{K}.
From now on, let $D$ be a pseudoeffective divisor, and $A$ be a fixed sufficiently positive ample $\Z$-divisor on $X$. Nakayama \cite{nakayama} proposed the following definitions of numerical Iitaka dimension:
$$
\arraycolsep=1.4pt\def\arraystretch{1.9}
\begin{array}{rl}
\kappa_{\sigma}(D)&:=\displaystyle  \max \left\{ k \in \Z_{\geq 0} \left| \limsup\limits_{m \to \infty} \frac{h^0(X, \lfloor mD \rfloor + A)}{m^k} > 0 \right.\right\} \\
\kappa_{\nu}(D)&:=\min \{ \dim W \mid \text{$D$ does not numerically dominate $W$}\}.
\end{array}
$$
Whenever we consider the situation that $m \to \infty$ as in the definition of $\kappa_{\sigma}$, we assume that $m$ is an integer.
Below, $\phi \colon (\widetilde{X}, \widetilde{W}) \to (X, W)$ range over all $W$-birational models, which by definition means that $W$ is not contained in any $\phi$-exceptional center and $\widetilde{W}$ is the strict transform of $W$ (see \cite[Definition 2.10]{lehmann-nu}). Using the volumes of divisors, Lehmann \cite{lehmann-nu} further introduced the following numerical Iitaka dimensions:
$$
\arraycolsep=1.4pt\def\arraystretch{1.9}
\begin{array}{rl}
\kappa_{\vol, \res}(D)&:=\max\left\{ \dim W \left| \vol^+_{X|W}(D) > 0\right. \right\}\\
\kappa_{\vol, \zar}(D)&:=\max \left\{ \dim W \left| \inf\limits_{\phi} \vol_{\widetilde{W}}(P_{\sigma}(\phi^* D)|_{\widetilde{W}}) > 0 \right.\right\}\\
\kappa_{\vol}(D)&:=\displaystyle \max \left\{k\in \Z_{\geq 0}\left| \liminf\limits_{\varepsilon \to 0} \frac{\vol_X(D+ \varepsilon A)}{\varepsilon^{n-k}} > 0\right. \right\}.
\end{array}
$$
We set $\kappa_{\sigma, \sup}(D):=\kappa_{\sigma}(D)$ and $\kappa_{\vol, \inf}(D):=\kappa_{\vol}(D)$, and we consider the following variants of $\kappa_{\sigma}(D)$ and $\kappa_{\vol}(D)$:
$$
\arraycolsep=1.4pt\def\arraystretch{1.9}
\begin{array}{rl}
\kappa_{\sigma, \inf}(D)&:=\displaystyle  \max \left\{ k \in \Z_{\geq 0} \left| \liminf\limits_{m \to \infty} \frac{h^0(X, \lfloor mD \rfloor + A)}{m^k} > 0 \right.\right\} \\
\kappa_{\vol, \sup}(D)&:=\displaystyle \max \left\{k\in \Z_{\geq 0}\left| \limsup\limits_{\varepsilon \to 0} \frac{\vol_X(D+ \varepsilon A)}{\varepsilon^{n-k}} > 0\right. \right\}.
\end{array}
$$
Lesieutre \cite{lesieutre} suggested to allow real values for the numerical Iitaka dimensions. Here we consider several $\R$-valued variants of $\kappa_{\sigma}(D)$ and $\kappa_{\vol}(D)$ as follows:
$$
\arraycolsep=1.4pt\def\arraystretch{1.9}
\begin{array}{rl}
\kappa_{\sigma, \inf}^{\R}(D)&:=\displaystyle \sup\left\{ k \in \R_{\geq 0} \left| \liminf\limits_{m \to \infty} \frac{h^0(X, \lfloor mD \rfloor + A)}{m^k} > 0 \right.\right\}\\
\kappa_{\sigma, \sup}^{\R}(D)&:=\displaystyle \sup \left\{ k \in \R_{\geq 0} \left| \limsup\limits_{m \to \infty} \frac{h^0(X, \lfloor mD \rfloor + A)}{m^k} > 0 \right.\right\}\\
\kappa_{\vol,\inf}^{\R}(D)&:=\displaystyle \sup \left\{k\in \R_{\geq 0}\left| \liminf\limits_{\varepsilon \to 0} \frac{\vol_X(D+ \varepsilon A)}{\varepsilon^{n-k}} > 0\right. \right\}\\
\kappa_{\vol,\sup}^{\R}(D)&:=\displaystyle \sup \left\{k\in \R_{\geq 0}\left| \limsup\limits_{\varepsilon \to 0} \frac{\vol_X(D+ \varepsilon A)}{\varepsilon^{n-k}} > 0\right. \right\}.
\end{array}
$$
Notice that $\kappa_{\star, \diamond}(D) \leq \kappa_{\star, \diamond}^{\R}(D)$, where $\star = \sigma$ or $\vol$ and $\diamond=\inf$ or $\sup$. If $\kappa_{\star, \diamond}^{\R}(D)$ is not an integer, then $\kappa_{\star, \diamond}(D)  = \lfloor \kappa_{\star, \diamond}^{\R}(D) \rfloor$. However, it is unclear whether $\kappa_{\star, \diamond}(D)  = \lfloor \kappa_{\star, \diamond}^{\R}(D) \rfloor$ always holds.
Below, $\phi \colon \widetilde{X} \to X$ range over all birational morphisms with $\widetilde{X}$ smooth projective. We further define variants of  $\kappa_{\sigma}(D)$ and $\kappa_{\vol}(D)$ as follows:
$$
\arraycolsep=1.4pt\def\arraystretch{1.9}
\begin{array}{rl}
\kappa_{\vol,\inf,\bir}^{\R}(D)&:=\displaystyle \sup \left\{k\in \R_{\geq 0}\left| \liminf\limits_{\varepsilon \to 0} \inf\limits_{\phi} \frac{\vol_{\widetilde{X}}(P_{\sigma}(\phi^*D)+ \varepsilon \phi^*A)}{\varepsilon^{n-k}} > 0\right. \right\}\\
\kappa_{\vol,\sup,\bir}^{\R}(D)&:=\displaystyle \sup \left\{k\in \R_{\geq 0}\left| \limsup\limits_{\varepsilon \to 0} \inf\limits_{\phi} \frac{\vol_{\widetilde{X}}(P_{\sigma}(\phi^*D)+ \varepsilon \phi^*A)}{\varepsilon^{n-k}} > 0\right. \right\}.
\end{array}
$$
Finally, we add one more definition of numerical Iitaka dimension. Choi--Hyun--Park--Won \cite{CHPW} introduced and studied the limiting Okounkov body $\oklim_{Y_\bullet}(D)$ of $D$ with respect to an admissible flag $Y_\bullet$ on $X$. Since the set of $\oklim_{Y_\bullet}(D)$ for all the admissible flags $Y_\bullet$ reflects the numerical properties of $D$, it is natural to consider the following numerical invariant:
$$
\kappa_{\Delta}(D) := \max\{ \dim \oklim_{Y_\bullet}(D) \mid Y_\bullet~\text{is an admissible flag on $X$} \}.
$$
Note that all of the definitions defined above depend only on the numerical class of $D$ and they are birational invariant. All the necessary notions required in the definitions of the numerical Iitaka dimensions are recalled in Section \ref{sec:prelim}.

\medskip

Note that when $D$ is nef, all of these notions coincide with Kawamata's numerical dimension $\nu(D)$.  From the main result of \cite{lehmann-nu}, we can extract the following equalities:
$$
\nu_{\BDPP}(D) = \kappa_{\vol, \res}(D)=\kappa_{\vol, \zar}(D).
$$
We first prove that some more numerical Iitaka dimensions also coincide with $\nu_{\BDPP}(D)$.

\begin{theorem}\label{thm:main}
Let $X$ be a $\Q$-factorial normal projective variety, and $D$ be a pseudoeffective $\R$-divisor on $X$. Then we have
$$
\nu_{\BDPP}(D)=\kappa_{\vol, \res}(D)=\kappa_{\vol,\zar}(D)= \kappa_{\vol, \inf, \bir}^{\R}(D) = \kappa_{\vol, \sup, \bir}^{\R}(D)  = \kappa_{\Delta}(D).
$$
\end{theorem}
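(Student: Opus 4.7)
Since Lehmann's main theorem in \cite{lehmann-nu} already furnishes $\nu_{\BDPP}(D)=\kappa_{\vol,\res}(D)=\kappa_{\vol,\zar}(D)$, the remaining task is to insert $\kappa_{\vol,\inf,\bir}^{\R}(D)$, $\kappa_{\vol,\sup,\bir}^{\R}(D)$, and $\kappa_{\Delta}(D)$ into this chain. Set $k:=\nu_{\BDPP}(D)$, so that $\langle D^k\rangle\neq 0$ and consequently $\langle D^k\rangle\cdot A^{n-k}>0$ for the sufficiently positive ample divisor $A$.

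For the two $\bir$-variants, the plan is to exploit the asymptotic expansion of the volume function through positive intersection products: on any smooth projective $\widetilde X$, with $\widetilde D$ pseudoeffective and $\widetilde A$ ample,
$$\vol_{\widetilde X}(\widetilde D+\varepsilon \widetilde A)\;=\;\binom{n}{n-k}\langle \widetilde D^{\,k}\rangle\cdot \widetilde A^{\,n-k}\,\varepsilon^{n-k}+o(\varepsilon^{n-k}),\qquad k=\nu_{\BDPP}(\widetilde D).$$
Apply this on each smooth birational model $\phi\colon \widetilde X\to X$ with $\widetilde D=P_\sigma(\phi^*D)$ and $\widetilde A=\phi^*A$. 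By the birational invariance of $\nu_{\BDPP}$, together with the identities $\langle P_\sigma(\phi^*D)^k\rangle=\langle(\phi^*D)^k\rangle=\phi^*\langle D^k\rangle$ and the projection formula, the leading coefficient becomes $\binom{n}{n-k}\langle D^k\rangle\cdot A^{n-k}$, which is independent of $\phi$ and strictly positive. What remains is to upgrade this pointwise-in-$\phi$ asymptotic to a uniform two-sided bound
$$\inf_{\phi}\vol_{\widetilde X}\bigl(P_\sigma(\phi^*D)+\varepsilon\phi^*A\bigr)\;\asymp\;\varepsilon^{n-k}\qquad(\varepsilon\to 0^+),$$
the lower estimate coming from log-concavity (Brunn--Minkowski) of the volume functional, the upper estimate from specializing to a fixed model such as $\phi=\mathrm{id}_X$. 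This immediately yields $\kappa_{\vol,\inf,\bir}^{\R}(D)=\kappa_{\vol,\sup,\bir}^{\R}(D)=k$.

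For the Okounkov body quantity, the plan combines the description from \cite{CHPW} of $\oklim_{Y_\bullet}(D)$ as the Hausdorff limit $\lim_{\varepsilon\to 0^+}\okbd_{Y_\bullet}(D+\varepsilon A)$ with the $\varepsilon$-asymptotic of $\vol_X(D+\varepsilon A)$ after passage to a high birational model. For $\varepsilon>0$ the Okounkov body has full dimension $n$ and Euclidean volume $\vol_X(D+\varepsilon A)/n!$; setting $j:=\dim\oklim_{Y_\bullet}(D)$, the family $\okbd_{Y_\bullet}(D+\varepsilon A)$ concentrates near a $j$-dimensional body whose transverse thickness in the remaining $n-j$ directions is $O(\varepsilon)$, forcing $\vol_X(D+\varepsilon A)=O(\varepsilon^{n-j})$. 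Combining this with the lower asymptotic $\vol_X(D+\varepsilon A)\gtrsim \varepsilon^{n-k}$ (available on a sufficiently high model from the $\bir$-analysis above) gives $j\leq k$ and therefore $\kappa_{\Delta}(D)\leq k$. For the reverse inequality, choose a subvariety $W\subset X$ of dimension $k$ witnessing $\kappa_{\vol,\res}(D)=k$, i.e.\ $\vol_{X|W}^+(D)>0$, and build an admissible flag $Y_\bullet$ whose $(n-k)$-th member is $W$; a restriction/slicing argument in the spirit of Lazarsfeld--Mustata should then exhibit a $k$-dimensional slab inside $\oklim_{Y_\bullet}(D)$ coming from the positive restricted volume on $W$.

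The most delicate step is expected to be the lower bound $\kappa_\Delta(D)\geq \nu_{\BDPP}(D)$: one has to construct an admissible flag compatible with a subvariety realizing $\kappa_{\vol,\res}$ and then control the $\varepsilon\to 0^+$ limit of slices of the valuative Okounkov body through that subvariety. On the $\bir$ side, the main technical point is the uniformity of the error term in $\phi$, so that taking the infimum over birational models does not spoil the leading-order asymptotic; this is where a convexity argument rather than a bare expansion must be invoked.
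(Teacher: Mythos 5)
Your argument contains a genuine gap, and it is precisely the gap that the Lesieutre example was constructed to expose. The claimed asymptotic expansion
$$\vol_{\widetilde X}(\widetilde D+\varepsilon \widetilde A)=\binom{n}{n-k}\langle \widetilde D^{\,k}\rangle\cdot \widetilde A^{\,n-k}\,\varepsilon^{n-k}+o(\varepsilon^{n-k})$$
is false in general for pseudoeffective $\widetilde D$ on a \emph{fixed} model. If it were true, it would immediately give $\kappa_{\vol,\inf}^{\R}(D)=\kappa_{\vol,\sup}^{\R}(D)=\nu_{\BDPP}(D)$ without any reference to birational models at all, which contradicts Lesieutre's threefold example where $\nu_{\BDPP}(D)=1$ but $\vol_X(D+\varepsilon A)\asymp\varepsilon^{3/2}$, so $\kappa_{\vol,\inf}^{\R}(D)=3/2\neq 1$. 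Only the lower bound $\vol_{\widetilde X}(P_\sigma(\phi^*D)+\varepsilon\phi^*A)\geq C\varepsilon^{n-k}$ (with $C$ uniform in $\phi$, coming from $\langle D^k\rangle\cdot A^{n-k}>0$ and superadditivity of positive intersection products) holds on every model; this is in fact how the paper proves $\nu_{\BDPP}(D)\leq\kappa_{\vol,\inf,\bir}^{\R}(D)$. But the matching upper bound does \emph{not} come from ``specializing to a fixed model such as $\phi=\mathrm{id}_X$'': although fixing $\phi=\mathrm{id}$ does bound $\inf_\phi$ from above by $\vol_X(P_\sigma(D)+\varepsilon A)$, that quantity is not $O(\varepsilon^{n-k})$ in general, as the Lesieutre example shows. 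The entire content of the statement is that one must let $\phi$ range over a sequence of ever-higher Fujita-approximation models $\phi_m\colon X_m\to X$ (with nef $M_m$ close to $\tfrac{1}{m}P_\sigma(\phi_m^*(\lceil mD\rceil+H))$) and send $m\to\infty$ to kill the contribution of the terms $M_m^{n-i}(\phi_m^*A)^i$ with $n-i>\kappa_{\vol,\res}(D)$ via the vanishing of $\vol_{X|W_i}^{+}(D)$. Your proposal never reaches this mechanism, and the convexity/log-concavity observation you suggest does not substitute for it.

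The same flaw propagates to your treatment of $\kappa_\Delta$. You deduce $\vol_X(D+\varepsilon A)=O(\varepsilon^{n-j})$ with $j=\dim\oklim_{Y_\bullet}(D)$ and combine it with a lower bound $\vol_X(D+\varepsilon A)\gtrsim\varepsilon^{n-k}$ ``available from the $\bir$-analysis''; but both sides are being applied on a fixed model, where neither holds. The paper proves the two needed inequalities for $\kappa_\Delta$ by different means: $\kappa_{\vol,\res}(D)\leq\kappa_\Delta(D)$ is imported from the proof of Proposition~3.21 of \cite{CHPW} (a flag built through a subvariety realizing $\kappa_{\vol,\res}$, which is roughly your plan for that direction), while $\kappa_\Delta(D)\leq\kappa_{\vol,\inf,\bir}^{\R}(D)$ is established by passing to a $Y_\bullet$-admissible birational model, using $\oklim_{\widetilde Y_\bullet}(\phi^*D)=\oklim_{\widetilde Y_\bullet}(P_\sigma(\phi^*D))$, subadditivity of Okounkov bodies, and the inclusion $\okbd_{\widetilde Y_\bullet}(P_\sigma(\phi^*D)+\varepsilon\phi^*A)\supseteq\oklim_{\widetilde Y_\bullet}(\phi^*D)+\varepsilon\okbd_{\widetilde Y_\bullet}(\phi^*A)$ to extract the $\varepsilon^{n-k}$ lower bound on the volume of the Okounkov body. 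That argument is uniform in $\phi$ and lives entirely on the side of the lower bound, avoiding the false upper bound you relied on.
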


For other numerical Iitaka dimensions, we only have the following inequalities:
$$
\begin{array}{ccccccc}
&&\kappa_{\vol, \inf}^{\R}(D) & \leq & \kappa_{\vol, \sup}^{\R}(D) & \leq & \kappa_{\nu}(D)\\
&& \vleq & & \vleq &&\\
\nu_{\BDPP}(D) & \leq & \kappa_{\sigma, \inf}^{\R}(D) & \leq & \kappa_{\sigma, \sup}^{\R}(D),&&
\end{array}
$$
and the same inequalities hold without $\R$-valued variants (see Proposition \ref{prop:ineqnumdim}). In particular, we have
$$
\begin{array}{ccc}
\kappa_{\vol}(D) & \leq & \kappa_{\nu}(D)\\
 \vleq & & \vleq \\
\nu_{\BDPP}(D) & \leq  & \kappa_{\sigma}(D).
\end{array}
$$

\medskip

Recently, Lesieutre \cite{lesieutre} showed that $\kappa_{\nu}(D)$ can be strictly larger than $\nu_{\BDPP}(D)$;  there exists a pseudoeffective divisor $D$ on a smooth projective threefold such that
$$
\nu_{\BDPP}(D)=1,~\kappa_{\sigma, \inf}^{\R}(D) = \kappa_{\sigma, \sup}^{\R}(D)=\kappa_{\vol, \inf}^{\R}(D)=\kappa_{\vol, \sup}^{\R}(D)=\frac{3}{2},~\kappa_{\nu}(D)=2.
$$
Note that in this example, $\nu_{\BDPP}(D)=\kappa_{\sigma}(D)=\kappa_{\vol}(D)=1$.
In this paper, we prove that $\kappa_{\sigma}(D)$ and $\kappa_{\vol}(D)$ can be arbitrarily larger than $\nu_{\BDPP}(D)$.
This gives an answer to Question 0.2 in the Errata of \cite{lehmann-nu}.

\begin{theorem}\label{thm:notcoincide}
For any given integer $k \geq 1$, there exist a smooth projective variety $X$ and a pseudoeffective $\R$-divisor $D$ on $X$ such that
$$
\kappa_{\sigma}(D) - \nu_{\BDPP}(D) \geq k~~ \text{and}~~ \kappa_{\vol}(D) - \nu_{\BDPP}(D) \geq k.
$$
\end{theorem}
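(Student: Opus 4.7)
The plan is to amplify Lesieutre's example by taking $m$-fold self-products. Starting from his smooth projective threefold $X$ with pseudoeffective $D$ satisfying $\nu_{\BDPP}(D) = 1$ and $\kappa_{\sigma,\sup}^{\R}(D) = \kappa_{\vol,\sup}^{\R}(D) = 3/2$, for given $k \geq 1$ I set $m := 2k+1$, $X_m := X^m$, $D_m := \sum_{i=1}^m \pi_i^* D$, and $A_m := \sum_{i=1}^m \pi_i^* A$. The target inequalities are
\[
\kappa_\sigma(D_m),\ \kappa_{\vol}(D_m) \;\geq\; \tfrac{3m-1}{2} \qquad\text{and}\qquad \nu_{\BDPP}(D_m) \;\leq\; m,
\]
which combine to yield $\kappa_\sigma(D_m) - \nu_{\BDPP}(D_m) \geq (m-1)/2 = k$ and similarly for $\kappa_\vol$.

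For the lower bounds the Künneth formula gives
\[
h^0\bigl(X_m, \lfloor tD_m \rfloor + A_m\bigr) \;=\; \prod_{i=1}^m h^0\bigl(X, \lfloor tD \rfloor + A\bigr),
\]
the identity $\lfloor tD_m \rfloor = \sum \pi_i^* \lfloor tD \rfloor$ being valid because prime divisors pulled back from distinct factors are disjoint. By definition of $\kappa_{\sigma,\sup}^{\R}(D) = 3/2$, for every $k' < 3/2$ there exist $c > 0$ and a sequence $t_j \to \infty$ with $h^0(X, \lfloor t_j D \rfloor + A) \geq c t_j^{k'}$, so the product grows at least like $c^m t_j^{mk'}$. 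Letting $k' \to 3/2$ yields $\kappa_{\sigma,\sup}^{\R}(D_m) \geq 3m/2$; since $3m/2$ is not an integer for odd $m$, we conclude $\kappa_\sigma(D_m) \geq \lfloor 3m/2 \rfloor = (3m-1)/2$. A parallel computation using the external product formula $\vol_{X_m}(\sum \pi_i^* L_i) = \binom{3m}{3,\ldots,3} \prod \vol_X(L_i)$ for big classes yields $\kappa_{\vol}(D_m) \geq (3m-1)/2$.

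The main obstacle is the matching upper bound $\nu_{\BDPP}(D_m) \leq m$. My plan is to expand multinomially
\[
\langle D_m^j \rangle \;=\; \sum_{j_1 + \cdots + j_m = j} \binom{j}{j_1,\ldots,j_m}\, \langle \pi_1^* D^{j_1} \cdots \pi_m^* D^{j_m} \rangle
\]
and to prove the external product rule $\langle \pi_1^* D^{j_1} \cdots \pi_m^* D^{j_m}\rangle = \pi_1^*\langle D^{j_1}\rangle \cdots \pi_m^*\langle D^{j_m}\rangle$ using simultaneous Fujita approximations on each factor: if $\mu_i \colon Y_i \to X$ approximates $D$, then on $Y_1 \times \cdots \times Y_m$ the pullback of $D_m$ decomposes as the external sum of the respective positive parts, and the claim follows by passing to the limit with the product rule for nef intersections. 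Since $\nu_{\BDPP}(D) = 1$, any index $j_i \geq 2$ forces $\langle D^{j_i}\rangle = 0$; when $j > m$, pigeonhole supplies such an index and every multinomial term vanishes. As a backup, Theorem \ref{thm:main} identifies $\nu_{\BDPP} = \kappa_\Delta$, so it would suffice to establish the product formula $\oklim_{Y_\bullet^{\,m}}(D_m) = \prod_i \oklim_{Y_\bullet}(D)$ for product admissible flags, giving $\nu_{\BDPP}(D_m) = m\,\nu_{\BDPP}(D) = m$ directly. Either way, the delicacy lies in the distributivity of these positive invariants under external products; once that is handled, the comparison of polynomial growth rates is routine.
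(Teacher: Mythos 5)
Your overall strategy (amplify Lesieutre's threefold example by taking self-products) is exactly the paper's strategy, and your lower bounds on $\kappa_\sigma$ and $\kappa_{\vol}$ via the K\"unneth formula and the external product formula for volumes are fine; the odd-$m$ choice plus the floor is a slightly different bookkeeping than the paper's even $2k$-fold product, but both work. The problem lies entirely in the upper bound $\nu_{\BDPP}(D_m) \leq m$, where you correctly flag the "delicacy" but the gap is real, not cosmetic.

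The multinomial expansion
\[
\langle D_m^j \rangle \;=\; \sum_{j_1 + \cdots + j_m = j} \binom{j}{j_1,\ldots,j_m}\, \langle \pi_1^* D^{j_1} \cdots \pi_m^* D^{j_m} \rangle
\]
is \emph{false} in general: the positive intersection product $\langle - \rangle$ is only super-additive in each slot, not multilinear, because the supremum over Fujita-type decompositions of a sum is not the sum of the suprema. Even if one grants the product rule $\langle \pi_1^* D^{j_1}\cdots\pi_m^* D^{j_m}\rangle = \pi_1^*\langle D^{j_1}\rangle\cdots\pi_m^*\langle D^{j_m}\rangle$ (itself not obvious, since arbitrary birational models of $X^m$ need not be dominated by products of models of $X$), you would still only get a \emph{lower} bound for $\langle D_m^j\rangle$ from product-type approximations, never the vanishing you need. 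Your Okounkov-body backup has the same one-sided defect: $\kappa_\Delta$ is the \emph{maximum} of $\dim\oklim_{Y_\bullet}(D_m)$ over all admissible flags on $X^m$, so computing it on a product flag $Y_\bullet^{\,m}$ gives $\nu_{\BDPP}(D_m) \geq m$, not $\leq m$.

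The paper closes exactly this gap with Proposition~\ref{prop:product}, which proves $\nu_{\BDPP}(p_1^*D_1 + p_2^*D_2) = \nu_{\BDPP}(D_1)+\nu_{\BDPP}(D_2)$ by a contradiction argument built on the identity $\nu_{\BDPP} = \kappa_{\vol,\res}$: if the sum had too-large numerical dimension, there would be a subvariety $W$ of dimension $k_1+k_2+1$ with $\vol^+_{X_1\times X_2|W}>0$; one then takes Fujita-type nef approximations $M_{1,m}, M_{2,m}$ on birational models of $X_1, X_2$ \emph{separately} (which \emph{does} respect the product structure) and bounds the restricted volume above by $(p_1^*M_{1,m}+p_2^*M_{2,m})^{k_1+k_2+1}\cdot W_m$. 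For nef classes the multinomial expansion is honest multilinearity, and since $M_{i,m}^{k_i+1}\to 0$, pigeonhole kills every term, forcing the contradiction. Restricted volumes and nef approximations are what make the expansion legitimate; positive products are never expanded. If you want to salvage your proposal, replace the expansion of $\langle D_m^j\rangle$ by this restricted-volume argument — or simply invoke Proposition~\ref{prop:product}.
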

In our example for Theorem \ref{thm:notcoincide} (Example \ref{ex:sigma,vol>bdpp}), $\kappa_{\vol}(D)=\kappa_{\sigma}(D)$ holds. Furthermore, we have
$$
\kappa_{\sigma, \inf}^{\R}(D) = \kappa_{\sigma, \sup}^{\R}(D)=\kappa_{\vol, \inf}^{\R}(D)=\kappa_{\vol, \sup}^{\R}(D).
$$
It would be interesting to know whether $\kappa_{\sigma, \inf}^{\R}(D) = \kappa_{\vol, \inf}^{\R}(D)$ and $\kappa_{\sigma, \sup}^{\R}(D) = \kappa_{\vol, \sup}^{\R}(D)$ hold in general (see Question \ref{ques:sigma=vol?} and Remark \ref{rem:newcounterex}).

\medskip

In view of the fundamental inequalities (\ref{eq:iitakadim}) for Iitaka dimensions, it is natural to ask whether there exist constants $C_1, C_2 > 0$ such that
$$
C_1 m^{\kappa_{\sigma, \inf}^{\R}(D)} < h^0(X, \lfloor mD \rfloor + A) < C_2 m^{\kappa_{\sigma, \sup}^{\R}(D)}
$$
for every sufficiently large integer $m > 0$ (see Question \ref{ques:fundineq} $(1)$).
Similarly, it is also natural to ask whether there exist constants $C_1, C_2 > 0$ such that
$$
C_1 \varepsilon^{n-\kappa_{\vol, \inf}^{\R}(D)} < \vol_X (D+\varepsilon A) < C_2 \varepsilon^{n-\kappa_{\vol, \sup}^{\R}(D)}
$$
for every sufficiently small number $\varepsilon > 0$ (see Question \ref{ques:fundineq} $(2)$). As an application of Theorem \ref{thm:main}, we show that the inequalities hold on the limit of higher birational models.

\begin{corollary}\label{cor:minmax}
Let $X$ be a $\Q$-factorial normal projective variety of dimension $n$, and $D$ be a pseudoeffective $\R$-divisor on $X$. Fix a sufficiently positive ample $\Z$-divisor $A$ on $X$.
Then there are constants $C_1, C_2 > 0$ such that
$$
C_1 \varepsilon^{n-\nu_{\BDPP}(D)} < \inf\limits_{\phi} \vol_{\widetilde{X}}(P_{\sigma}(\phi^*D)+\varepsilon \phi^*A) < C_2 \varepsilon^{n-\nu_{\BDPP}(D)}
$$
for every sufficiently small number $\varepsilon > 0$, $\phi \colon \widetilde{X} \to X$ in $\inf$ range over all birational morphisms with $\widetilde{X}$ smooth projective.
\end{corollary}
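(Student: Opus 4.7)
Write $\nu := \nu_{\BDPP}(D)$ and $F(\varepsilon) := \inf_{\phi} \vol_{\widetilde{X}}(P_{\sigma}(\phi^{*}D) + \varepsilon \phi^{*}A)$. The target estimate $C_{1} \varepsilon^{n-\nu} < F(\varepsilon) < C_{2} \varepsilon^{n-\nu}$ is a quantitative refinement of Theorem \ref{thm:main}, whose equalities $\kappa^{\R}_{\vol,\inf,\bir}(D) = \kappa^{\R}_{\vol,\sup,\bir}(D) = \nu$ only pin down the order of $F(\varepsilon)$ up to subpolynomial corrections. My plan is to compare $F(\varepsilon)$ against the $\phi$-independent polynomial
\begin{equation*}
\Phi(\varepsilon) := \sum_{j=0}^{\nu} \binom{n}{j} \varepsilon^{n-j} \langle D^{j} \rangle \cdot A^{n-j},
\end{equation*}
in which terms with $j > \nu$ vanish by the definition of $\nu = \nu_{\BDPP}(D)$ and whose leading term $\binom{n}{\nu} \varepsilon^{n-\nu} \langle D^{\nu} \rangle \cdot A^{n-\nu}$ has strictly positive coefficient, since $\langle D^{\nu} \rangle \neq 0$ and $A$ is sufficiently ample.

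For the lower bound, the key tool is the superadditivity of the positive intersection product in the form
\begin{equation*}
\vol_{\widetilde{X}}(P_{\sigma}(\phi^{*}D) + \varepsilon \phi^{*}A) = \langle (P_{\sigma}(\phi^{*}D) + \varepsilon \phi^{*}A)^{n} \rangle \geq \sum_{j=0}^{n} \binom{n}{j} \varepsilon^{n-j} \langle P_{\sigma}(\phi^{*}D)^{j} \rangle \cdot (\phi^{*}A)^{n-j}.
\end{equation*}
By birational invariance of the positive intersection product together with the identity $\langle P_{\sigma}(\phi^{*}D)^{j} \rangle = \phi^{*}\langle D^{j} \rangle$, the right-hand side equals $\Phi(\varepsilon)$ independently of $\phi$. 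Hence $F(\varepsilon) \geq \Phi(\varepsilon) \geq C_{1} \varepsilon^{n-\nu}$ uniformly in $\phi$.

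For the upper bound, for each small $\varepsilon > 0$ I would exhibit a Fujita-style birational model $\phi_{\varepsilon} \colon \widetilde{X}_{\varepsilon} \to X$ on which $P_{\sigma}(\phi_{\varepsilon}^{*}D)$ is nef and each intersection number $P_{\sigma}(\phi_{\varepsilon}^{*}D)^{j} \cdot (\phi_{\varepsilon}^{*}A)^{n-j}$ approximates $\langle D^{j} \rangle \cdot A^{n-j}$, which is $0$ for $j > \nu$, to a prescribed tolerance. The volume then equals the binomial self-intersection, and the terms with $j > \nu$ become negligible, yielding
\begin{equation*}
F(\varepsilon) \leq \vol_{\widetilde{X}_{\varepsilon}}(P_{\sigma}(\phi_{\varepsilon}^{*}D) + \varepsilon \phi_{\varepsilon}^{*}A) \leq \Phi(\varepsilon) + o(\varepsilon^{n-\nu}) \leq C_{2} \varepsilon^{n-\nu}.
\end{equation*}
The main obstacle is $\varepsilon$-quantitative control: the error tolerance must beat $\varepsilon^{n-\nu-j}$ uniformly for each $j > \nu$, so $\phi_{\varepsilon}$ must be chosen high enough as $\varepsilon \to 0$. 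This uniformity is exactly what the proof of Theorem \ref{thm:main} must build en route to the equality $\kappa^{\R}_{\vol,\sup,\bir}(D) = \nu$, and I would extract the required quantitative estimate from that construction.
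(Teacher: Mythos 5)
Your lower-bound argument is correct and is essentially the same as the paper's: equation (3.2) in the proof of Theorem \ref{thm:main} also rests on superadditivity and birational invariance of the positive intersection product, and the paper simply keeps the single term $j=\nu$ rather than the whole binomial sum. For the upper bound your strategy (Fujita-style approximation, kill the high-order coefficients) is also the paper's, but two points are off. First, there is in general \emph{no} birational model on which $P_\sigma(\phi^*D)$ becomes nef --- Nakayama's fourfold example shows divisorial Zariski decompositions are not preserved under pullback in that strong sense; the paper instead uses Lehmann's approximation (Proposition 3.8 of \cite{lehmann-nu}), which produces a nef and big $M_m$ with $M_m \leq \tfrac{1}{m}P_\sigma(\phi_m^*(\lceil mD\rceil + H)) \leq M_m + \tfrac{1}{m}\phi_m^*G$, and the binomial expansion is run on $M_m + \varepsilon\phi_m^*A$. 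Second, the $\varepsilon$-dependent choice of model and the accompanying worry about uniformity are unnecessary, precisely because the statement has $\inf_\phi$ in it: for each fixed $\varepsilon$ one passes to the limit $m\to\infty$ along the $\varepsilon$-independent sequence $\phi_m$, and $M_m^{n-i}(\phi_m^*A)^i \to \vol^+_{X|W_i}(D)$, which is exactly $0$ for $n-i > \nu$. Thus
\begin{equation*}
\inf_\phi \vol_{\widetilde{X}}\bigl(P_\sigma(\phi^*D)+\varepsilon\phi^*A\bigr) \ \leq\ \lim_{m\to\infty}\vol_{X_m}\bigl(P_\sigma(\phi_m^*D)+\varepsilon\phi_m^*A\bigr) \ \leq\ \varepsilon^{\,n-\nu}\sum_{i=n-\nu}^{n}\binom{n}{i}\varepsilon^{\,i-n+\nu}\,\vol^+_{X|W_i}(D),
\end{equation*}
whose right-hand side is $\leq C_2\varepsilon^{n-\nu}$ for all $0<\varepsilon\leq 1$. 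This is exactly what the paper extracts from equation (3.4); no tuning of $m$ against $\varepsilon$ is needed.
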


Notice that $\kappa(D) \leq \nu_{\BDPP}(D)$. We say that a divisor $D$ is \emph{abundant} if $\kappa(D)=\nu_{\BDPP}(D)$ holds (cf. \cite{BDPP}). This notion is weaker than the conditions $\kappa(D)=\kappa_{\sigma}(D)$ or $\kappa(D)=\kappa_{\nu}(D)$, which are also often referred to as the abundance in the literature (see e.g. \cite{DHP}, \cite{F}, \cite{GL}, \cite{nakayama}).
In this paper, we generalize the well-known result of Kawamata for nef and abundant divisors \cite[Proposition 2.1]{K} (see also the Errata of \cite{lehmann-red}).

\begin{theorem}\label{thm:abundantdivisor}
Let $X$ be a $\Q$-factorial normal projective variety, and $D$ be an effective $\R$-divisor on $X$ such that $\kappa(D) > 0$. Then $D$ is abundant in the sense that $\kappa(D) = \nu_{\BDPP}(D)$ holds if and only if there exist a birational morphism $\mu \colon W \to X$ from a smooth projective variety $W$ and a surjective morphism $g \colon W \to Z$ to a smooth projective variety $Z$ with connected fibers such that
$$
P_{\sigma}(\mu^*D) \sim_{\Q} P_{\sigma}(g^*B)
$$
for some big divisor $B$ on $Z$, where $g \colon W \to Z$ is a birational model of the Iitaka fibration of $D$.
\end{theorem}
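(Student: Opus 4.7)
The plan is to prove the two directions of the biconditional separately. The reverse implication is a routine consequence of the functorial properties of $\nu_{\BDPP}$, while the forward implication requires substantive input from the Iitaka fibration together with divisorial Zariski decomposition.

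For the reverse direction, assume $P_{\sigma}(\mu^*D) \sim_{\Q} P_{\sigma}(g^*B)$ with $B$ big on $Z$. I would chain three standard facts: (i) $\nu_{\BDPP}$ is a birational invariant depending only on the numerical class, (ii) $\nu_{\BDPP}(D') = \nu_{\BDPP}(P_{\sigma}(D'))$ because the positive intersection product $\langle (D')^k \rangle$ sees only the positive part of the divisorial Zariski decomposition, and (iii) $\nu_{\BDPP}$ is compatible with pullback by surjective morphisms, so that $\nu_{\BDPP}(g^*B) = \nu_{\BDPP}(B) = \dim Z$ since $B$ is big on $Z$. Chaining these yields $\nu_{\BDPP}(D) = \dim Z = \kappa(D)$, the desired abundance.

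For the forward direction, I would begin with a birational model of the Iitaka fibration: $\mu \colon W \to X$ birational from smooth $W$ and $g \colon W \to Z$ surjective with connected fibers and $\dim Z = \kappa(D)$. By the defining property of the Iitaka fibration, for sufficiently divisible $m$ the linear system $|m\mu^*D|$ decomposes as $g^*|C_m| + F_m$ with $|C_m|$ big on $Z$ and $F_m$ the fixed part; this furnishes $\mu^*D \sim_{\Q} g^*B + E$ with $B$ a big $\Q$-divisor on $Z$ and $E$ an effective $\Q$-divisor on $W$. The theorem then reduces to the key identity $P_{\sigma}(\mu^*D) \sim_{\Q} P_{\sigma}(g^*B)$, i.e.\ to showing that $E$ is absorbed into the negative part.

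To prove this identity I would first show that $P_{\sigma}(\mu^*D)|_F \equiv 0$ numerically for a general fiber $F$ of $g$: the Iitaka-fibration property $\kappa(\mu^*D|_F) = 0$ forces $\mu^*D|_F$ to be a rigid effective divisor with $P_{\sigma}(\mu^*D|_F) = 0$, and generality of $F$ (so that $F$ avoids the divisorial support of $N_{\sigma}(\mu^*D)$ and the $\mu$-exceptional locus) together with the characterization of $P_{\sigma}$ via asymptotic multiplicities forces the restriction $P_{\sigma}(\mu^*D)|_F$ itself to be numerically trivial. A pullback lemma for $\sigma$-movable classes numerically trivial on general fibers of a fibration then yields $P_{\sigma}(\mu^*D) \equiv g^*M$ for a class $M$ on $Z$, and the hypothesis $\nu_{\BDPP}(P_{\sigma}(\mu^*D)) = \dim Z$ forces $M$ to be big. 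A final bookkeeping step, possibly after a further birational modification, replaces $B$ by a suitable $\Q$-linearly equivalent big divisor and upgrades numerical to $\Q$-linear equivalence.

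The main technical obstacle is controlling the restriction $P_{\sigma}(\mu^*D)|_F$ in terms of $P_{\sigma}(\mu^*D|_F)$: restriction of the divisorial Zariski decomposition is well known to be delicate and need not behave well on arbitrary subvarieties, so the argument must lean strongly on the genericity of $F$ and on the rigidity of $\mu^*D|_F$ coming from $\kappa(\mu^*D|_F) = 0$. If the direct approach proves recalcitrant, a backup strategy would be to bypass restriction by invoking Theorem \ref{thm:main} and Corollary \ref{cor:minmax} to compare the volume asymptotics of $P_{\sigma}(\phi^*D) + \varepsilon \phi^*A$ with those of $g^*B + \varepsilon \phi^*A$ as $\varepsilon \to 0$ on progressively higher birational models, and to deduce the pullback description of $P_{\sigma}(\mu^*D)$ from a matching of leading-order coefficients.
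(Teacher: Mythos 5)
Your forward direction contains a genuine gap at the step where you claim that ``the Iitaka-fibration property $\kappa(\mu^*D|_F)=0$ forces $\mu^*D|_F$ to be a rigid effective divisor with $P_{\sigma}(\mu^*D|_F)=0$.'' This implication is false: $\kappa(L)=0$ does not imply $P_{\sigma}(L)\equiv 0$. (Zariski's example gives a nef divisor $L$ on a surface with $\kappa(L)=0$ but $\nu(L)=1$; here $P_{\sigma}(L)=L\not\equiv 0$.) The passage from $\kappa=0$ on fibers to $\kappa_{\sigma}=0$ on fibers is precisely where the abundance hypothesis $\kappa(D)=\nu_{\BDPP}(D)$ must enter, and your proposal instead invokes that hypothesis only later (to deduce that the putative class $M$ on $Z$ is big). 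In the paper's argument this is handled by exploiting Theorem~\ref{thm:main}: since $\kappa_{\Delta}(D)=\nu_{\BDPP}(D)=\kappa(D)=\dim Z$, every limiting Okounkov body of $D$ has dimension at most $\dim Z$; choosing a fiber-type admissible flag then forces $\vol^+_{X|C}(D)=0$ for a general curve $C$ in a general fiber, which via Lehmann's nef approximation and Proposition~\ref{prop:numdim=0,n-1,n} yields $\kappa_{\sigma}(P_{\sigma}(\mu^*D)|_{F'})=0$ by contradiction.

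A second, smaller issue: your final step proposes to upgrade a numerical equivalence $P_{\sigma}(\mu^*D)\equiv g^*M$ to the asserted $\Q$-linear equivalence by ``bookkeeping,'' but numerical triviality of a $\Q$-divisor does not imply $\Q$-linear triviality. The paper sidesteps both difficulties at once by reducing, after the $\kappa_{\sigma}$-computation on fibers, to Nakayama's structural criterion \cite[Corollary V.2.26]{nakayama} (or \cite[Theorem 5.7]{lehmann-red}), which already produces the fibration $g$, the big divisor $B$, and the $\Q$-linear equivalence $P_{\sigma}(\mu^*D)\sim_{\Q}P_{\sigma}(g^*B)$ in one stroke. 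Your ``backup strategy'' of comparing volume asymptotics via Theorem~\ref{thm:main} and Corollary~\ref{cor:minmax} is pointed in the right direction and is essentially how the paper does get the needed vanishing on fibers, but as sketched it does not close either gap. The reverse direction of your proposal is fine.
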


One may wonder whether $\kappa(D) = \nu_{\BDPP}(D)$ implies $\kappa(D)=\kappa_{\nu}(D)$.
If $\kappa(D) = \nu_{\BDPP}(D)$, then Theorem \ref{thm:abundantdivisor} shows that $\kappa(D)=\kappa_{\nu}(P_{\sigma}(\mu^*D))$. At this moment, it is not clear to us  whether $\kappa_{\nu}(\mu^*D)=\kappa_{\nu}(P_{\sigma}(\mu^*D))$ always holds or not (see Question \ref{ques:numdimdivzar}).

\medskip

Let $(X, \Delta)$ be a $\Q$-factorial projective klt pair such that $K_X + \Delta$ is pseudoeffective.
The abundance conjecture, which predicts that $K_X + \Delta$ is abundant, is one of the most important open problems in birational geometry.
It is well known that $\kappa(K_X+\Delta)=\kappa_{\sigma}(K_X+\Delta)$ if and only if $(X,\Delta)$ has a good minimal model (see \cite[Remark 2.6]{DHP}, \cite[Theorem 4.3]{GL}; see also \cite[Section 3]{F}).
We refer to Section \ref{sec:abdiv} for the definitions of a klt pair and a good minimal model.
In this paper, we show that the weaker notion of the abundance for $(X, \Delta)$, that is $\kappa(K_X+\Delta)=\nu_{\BDPP}(K_X+\Delta)$, is enough to guarantee the existence of a good minimal model of $(X, \Delta)$.

\begin{theorem}\label{thm:abconj<=>goodmin}
Let $(X,\Delta)$ be a $\Q$-factorial projective klt pair such that $K_X+\Delta$ is a pseudoeffective $\Q$-divisor. Then $K_X+\Delta$ is abundant in the sense that $\kappa(K_X+\Delta)=\nu_{\BDPP}(K_X+\Delta)$ holds if and only if $(X,\Delta)$ has a good minimal model.
\end{theorem}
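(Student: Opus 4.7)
The plan splits into two implications. For the forward direction, I would argue as follows: if $(X,\Delta)$ admits a good minimal model $(Y,\Delta_Y)$, then $K_Y+\Delta_Y$ is semi-ample and hence nef, so $\kappa(K_Y+\Delta_Y) = \nu(K_Y+\Delta_Y) = \nu_{\BDPP}(K_Y+\Delta_Y)$ since the latter two invariants agree on nef classes. Both $\kappa$ and $\nu_{\BDPP}$ are preserved along the steps of the MMP from $X$ to $Y$ (equivalently, both descend through a common smooth resolution of the birational map $X \dashrightarrow Y$), so the same equality transfers back to $(X,\Delta)$.

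For the converse the plan is to upgrade the assumption $\kappa(K_X+\Delta) = \nu_{\BDPP}(K_X+\Delta)$ to the stronger equality $\kappa(K_X+\Delta) = \kappa_\sigma(K_X+\Delta)$; the existence of a good minimal model then follows from the characterization \cite[Remark 2.6]{DHP}, \cite[Theorem 4.3]{GL} already cited in the paper. I split into two cases according to whether $\kappa(K_X+\Delta)$ is positive or zero.

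Assume $\kappa(K_X+\Delta) > 0$. First apply Theorem \ref{thm:abundantdivisor} to obtain a smooth birational model $\mu\colon W \to X$, a morphism $g\colon W \to Z$ that is a birational model of the Iitaka fibration of $K_X+\Delta$, and a big divisor $B$ on $Z$ with $P_\sigma(\mu^*(K_X+\Delta)) \sim_{\Q} P_\sigma(g^*B)$. Next, combining the birational invariance of $\kappa_\sigma$ with Nakayama's identity $\kappa_\sigma(D) = \kappa_\sigma(P_\sigma(D))$ applied on both sides of this $\Q$-equivalence reduces the problem to showing $\kappa_\sigma(g^*B) = \dim Z$. The inequality $\kappa_\sigma(g^*B) \geq \kappa(g^*B) = \kappa(B) = \dim Z$ is automatic since $B$ is big. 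The reverse inequality will come from the projection formula $g_*\mathcal{O}_W(\lfloor m g^*B \rfloor + A_W) = \mathcal{O}_Z(\lfloor mB \rfloor) \otimes g_*\mathcal{O}_W(A_W)$ for any fixed ample $A_W$ on $W$: since $g_*\mathcal{O}_W(A_W)$ is a torsion-free coherent sheaf of finite rank on $Z$ and $B$ is big, its global sections grow at most like $m^{\dim Z}$. In the remaining case $\kappa(K_X+\Delta) = 0$, the hypothesis gives $\nu_{\BDPP}(K_X+\Delta) = 0$, which is equivalent to $P_\sigma(K_X+\Delta) \equiv 0$ on a smooth birational model, and Nakayama's identity again yields $\kappa_\sigma(K_X+\Delta) = 0$.

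I expect the main technical hurdle to be the computation $\kappa_\sigma(g^*B) = \dim Z$ in the positive case: Theorem \ref{thm:abundantdivisor} only controls the positive part $P_\sigma$, so one has to rule out any additional growth of $\kappa_\sigma(K_X+\Delta)$ coming either from the negative part $N_\sigma(\mu^*(K_X+\Delta))$ or from the vertical directions of $g$. Both potential contributions are handled by the combination of $\kappa_\sigma = \kappa_\sigma \circ P_\sigma$ with the projection-formula estimate above.
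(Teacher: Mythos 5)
Your ``if'' direction (good minimal model implies abundance) is correct and close in spirit to the paper's, which simply computes $\nu_{\BDPP}(K_{X'}+\Delta')=\dim Z=\kappa$ from the semiample fibration $f\colon X'\to Z$. The problem lies in your ``only if'' direction: the step where you invoke ``Nakayama's identity $\kappa_{\sigma}(D)=\kappa_{\sigma}(P_{\sigma}(D))$'' is not a known theorem. This is precisely Nakayama's open Problem V.1.8, recorded in this paper as Question~\ref{ques:numdimdivzar}. Only one inequality is known: since $P_{\sigma}(D)\leq D$ one gets $\kappa_{\sigma}(P_{\sigma}(D))\leq\kappa_{\sigma}(D)$, but the reverse inequality --- exactly what you need to pass from $\kappa_{\sigma}(\mu^*(K_X+\Delta))$ to $\kappa_{\sigma}(P_{\sigma}(\mu^*(K_X+\Delta)))\sim_\Q\kappa_{\sigma}(P_{\sigma}(g^*B))$ --- is open. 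The paper even flags this difficulty immediately after Theorem~\ref{thm:abundantdivisor}: ``it is not clear to us whether $\kappa_{\nu}(\mu^*D)=\kappa_{\nu}(P_{\sigma}(\mu^*D))$ always holds.'' Your $\kappa=0$ case survives, because at numerical dimension zero the identity does hold ($P_{\sigma}(D)\equiv 0\Rightarrow\kappa_{\nu}(D)=0\Rightarrow\kappa_{\sigma}(D)=0$ via Proposition~\ref{prop:numdim=0,n-1,n}), but for $\kappa>0$ this is a genuine gap.

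The paper's proof avoids this entirely. Rather than trying to establish $\kappa_{\sigma}(K_X+\Delta)=\kappa(K_X+\Delta)$ globally, it passes to a smooth model $W\to X$ with Iitaka fibration $g\colon W\to Z$, writes $K_W+\Gamma=\mu^*(K_X+\Delta)+\sum a_iE_i$, and proves via an Okounkov-body/fiber-type-flag computation that $\nu_{\BDPP}(K_F+\Gamma|_F)=0$ on a general fiber $F$. At numerical dimension zero the troublesome identity is available, so $\kappa_{\sigma}(K_F+\Gamma|_F)=0$, and then Nakayama's Corollary V.4.9 (or [GL, Theorem~4.2]) gives a good minimal model of $(F,\Gamma|_F)$, after which Lai's Theorem~4.4 lifts this to a good minimal model of $(W,\Gamma)$ and hence of $(X,\Delta)$. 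Your projection-formula estimate for $\kappa_{\sigma}(g^*B)$ is fine as far as it goes, but it does not help you because the obstruction sits upstream, in comparing $\kappa_{\sigma}$ of a divisor with that of its positive part.
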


We remark that the $\Q$-factoriality on $X$ in the main theoerems was assumed only for simplicity.
Such condition can be removed in most of the results in this paper.

\medskip

The paper is organized as follows. We begin by recalling some basic notions for the definitions of the numerical Iitaka dimensions in Section \ref{sec:prelim}. In Section \ref{sec:numiitakadim}, we present several basic properties of numerical Iitaka dimensions, and we prove Theorem \ref{thm:main} and Corollary \ref{cor:minmax}. In Section \ref{sec:example}, we present an example of a pseudoeffective divisor $D$ on some smooth projective variety $X$ for which the inequalities of Theorem \ref{thm:notcoincide} hold. Section \ref{sec:abdiv} is devoted to proving Theorems \ref{thm:abundantdivisor} and \ref{thm:abconj<=>goodmin}.


\section{Preliminaries}\label{sec:prelim}

Most of the definitions of the numerical Iitaka dimensions in the introduction are self-explanatory.
Below, we recall the notions that are used in defining the numerical Iitaka dimensions. Throughout the section, $X$ is an $n$-dimensional $\Q$-factorial normal projective variety, $V \subseteq X$ is a $v$-dimensional irreducible closed subvariety, and $D$ is an $\R$-divisor on $X$.

\subsection{Divisorial Zariski decomposition}
We denote by $\ord_V(E)$ the order of an effective divisor $E$ along $V$.
If $D$ is big, then \emph{the asymptotic valuation} of $V$ at $D$ is defined as
$$
\ord_V(||D||):=\inf\{\ord_V(D')\mid D\equiv D'\geq 0\}.
$$
If $D$ is only pseudoeffective, then \emph{the asymptotic valuation} of $V$ at $D$ is defined as
$$
\ord_V(||D||):=\lim_{\varepsilon\to 0+}\ord_V(||D+\varepsilon A||)
$$
where $A$ is an ample divisor on $X$. One can check that this definition is independent of the choice of the ample divisor $A$. Note that $\ord_V(||D||)$ is a numerical invariant of $D$ and the function $\ord_V(|| \cdot ||) \colon \text{Big}(X) \to \R$ is continous on the cone of big divisor classes.
In particular, if $D$ is big, then $\ord_V(||D||)=\displaystyle\lim_{\varepsilon\to 0+}\ord_V(||D+\varepsilon A||)$ for any ample divisor $A$ on $X$.
Suppose that $D$ is pseudoeffective. By \cite[Corollary III.1.1]{nakayama}, there are only finitely many prime divisors $E$ on $X$ such that $\ord_E(||D||)>0$. The \emph{negative part} of $D$ is defined as $N_{\sigma}(D):=\sum_{E}  \ord_E(||D||)E$ where the summation is taken over all the finitely many prime divisors $E$ of $X$ such that $ \ord_E(||D||)>0$. Note that the negative part $N_{\sigma}(D)$ is a numerical invariant of $D$. The \emph{positive part} of $D$ is defined as $P_{\sigma}(D):=D-N_{\sigma}(D)$. By \cite[Proposition III.1.14]{nakayama}, the positive part $P_{\sigma}(D)$ can be characterized as the maximal movable divisor such that $P_{\sigma}(D) \leq D$. The \emph{divisorial Zariski decomposition} of $D$ is given by the decomposition
$$
D=P_{\sigma}(D) + N_{\sigma}(D).
$$
For more details, we refer to \cite{B}, \cite{elmnp1}, \cite[Chapter III]{nakayama}.

\subsection{Asymptotic base loci}
The \emph{stable base locus} of $D$ is defined as
$$
\SB(D):= \bigcap_{D \sim_{\R} D' \geq 0} \Supp(D').
$$
where $D\sim_\R D'$ denotes the $\R$-linear equivalence, that is, $D-D'$ is the $\R$-linear combination of principal divisors.
The \emph{augmented base locus} of $D$ is defined as
$$
\bp(D):=\bigcap_{A\text{:ample}}\text{SB}(D-A).
$$
The \emph{restricted base locus} of $D$ is defined as
$$
\bm(D):=\bigcup_{A\text{:ample}}\SB(D+A).
$$
It is well known that  $\bp(D)$ and $\bm(D)$ depend only on the numerical class of $D$.
Unlike $\mathbf B_+(D)$, Lesieutre  \cite{john} proved that $\mathbf B_-(D)$ is not always Zariski closed. In this case, the dimension $\dim\mathbf B_-(D)$ is defined as the maximum of the dimensions of the irreducible components of $\mathbf B_-(D)$.
The union of codimension one components of $\bm(D)$ coincides with  $\Supp(N_{\sigma}(D))$. For more details, we refer to \cite{elmnp1}.

\subsection{Restricted volumes}
The \emph{restricted volume} of $D$ along $V$ is defined as
$$
\vol_{X|V}(D):=\limsup_{m \to \infty} \frac{h^0(X|V,\lfloor mD \rfloor)}{m^v/v!}
$$
where $h^0(X|V, \lfloor mD \rfloor)$ is the dimension of the image of the natural restriction map
$$
H^0(X, \mc O_X(\lfloor mD \rfloor ))\to H^0(V,\mc O_V(\lfloor mD \rfloor|_V)).
$$
When $V=X$, we simply set
$$
\vol_X(D):=\vol_{X|X}(D) = \limsup_{m \to \infty} \frac{h^0(X, \lfloor mD \rfloor)}{m^n/n!}
$$
and we call it the \emph{volume} of $D$.
If $V \not\subseteq \bp(D)$, then `$\limsup$' can be replaced by `$\lim$' in the definition, the restricted volume $\vol_{X|V}(D)$ depends only on the numerical class of $D$, and it uniquely extends to a continuous function
$
\vol_{X|V} \colon \text{Big}^V (X) \to \R
$
where $\text{Big}^V(X)$ is the set of all $\R$-divisor classes $\xi$ such that $V$ is not properly contained in any irreducible component of $\bp(\xi)$ (see \cite[Corollaries 2.14, 2.15 and Theorem 5.2]{elmnp2}). Furthermore, in this case, $\vol_{X|V}(D)=\vol_{X|V}(P_{\sigma}(D))$ holds.
If $\phi \colon (\widetilde{X}, \widetilde{V}) \to (X, V)$ is a $V$-birational morphism, then $\vol_{\widetilde{X}|\widetilde{V}}(\phi^*D) = \vol_{X|V}(D)$ (see \cite[Lemma 2.4]{elmnp2}).
The \emph{augmented restricted volume} of $D$ along $V$ is defined as
$$
\vol_{X|V}^+(D):=\lim_{\varepsilon \to 0+} \vol_{X|V}(D+\varepsilon A),
$$
where $A$ is an ample divisor on $X$. It is easy to check that the definition is independent of the choice of the $A$ and $\vol_{X|V}^+(D)$ depends only on the numerical class of $D$. If $V \subseteq \bm(D)$, then $\vol_{X|V}^+(D)=0$ by definition. For more details, we refer to \cite{CHPW}, \cite{elmnp2}.

\subsection{Restricted positive intersection product}
For two cycle classes $K, K' \in N^k(Y)$ on a normal variety $Y$ of dimension $m$, we define $K \geq K'$ if $K-K'$ is contained in the closure of the cone generated by effective cycles of dimension $m-k$.
When $L_1, \ldots, L_k$ are big divisors on $X$ with $V \not\subseteq \bp(L_i)$ for all $1 \leq i \leq k$, we define the class
$$
\langle L_1 \cdots L_k \rangle_{X|V} \in N^k(V)
$$
as the maximum under the ordering $\leq$ of $\phi_* (N_1 \cdots N_K \cdot \widetilde{V})$, where $\phi \colon (\widetilde{X}, \widetilde{V}) \to (X, V)$ run over the smooth $V$-birational models,  $N_i$ are nef divisors on $\widetilde{X}$ such that $E_i:=\phi^*L_i - N_i \geq 0$ and $\widetilde{V} \not\subseteq \Supp(E_i)$ for each $1 \leq i \leq n$.
When $L_1, \ldots, L_k$ are pseudoeffective divisors on $X$ with $V \not\subseteq \bm(L_i)$ for all $1 \leq i \leq k$, we define the class
$$
\langle L_1 \cdots L_k \rangle_{X|V} = \lim_{m \to \infty} \langle (L_1+B_{1,m}) \cdots (L_k +  B_{k,m}) \rangle_{X|V} \in N^k(V)
$$
where $B_{i,m}$ are big divisors with $V \not\subseteq \bp(B_{i,m})$ converging to $0$ as $m \to \infty$ for each $1 \leq i \leq k$. This limit is independent of the choice of the $B_{i,m}$.
When $V=X$, we set
$$
\langle L_1 \cdots L_k \rangle := \langle L_1 \cdots L_k \rangle_{X|X}.
$$
By \cite[Proposition 4.13]{lehmann-nu}, we have
$$
\langle L_1 \cdots L_k \rangle|_{X|V} = \langle P_{\sigma}(L_1) \cdots P_{\sigma}(L_k) \rangle|_{X|V}.
$$
If $D$ is a big divisor on $X$ with $V \not\subseteq \bp(D)$, then $\deg \langle D^v \rangle|_{X|V} = \vol_{X|V}(D)$ (see \cite[Theorem 2.13]{elmnp2}).
For more details, we refer to \cite{BFJ}, \cite{lehmann-nu}.

\subsection{Okounkov bodies}
Let $Y_\bullet$ be an \emph{admissible flag} on $X$, which by definition is a sequence of irreducible subvarieties
$$
Y_\bullet: X=Y_0\supseteq Y_1\supseteq\cdots \supseteq Y_{n-1}\supseteq Y_n=\{x\}
$$
where each $Y_i$ has codimension $i$ in $X$ and is smooth at the point $x$. Assume that $|D|_{\R}:=\{ D' \mid D \sim_{\R} D' \geq 0 \}\neq\emptyset$.  For any $D'\in |D|_\R$, we define $\nu_1=\nu_1(D'):=\ord_{Y_1}(D')$. Then $D'-\nu_1 Y_1$ is effective and does not contain $Y_1$ in the support, so we can define $\nu_2=\nu_2(D'):=\ord_{Y_2}((D'-\nu_1Y_1)|_{Y_1})$. For $2 \leq i \leq n-1$, we inductively define $\nu_{i+1}=\nu_{i+1}(D'):=\ord_{Y_{i+1}}((\cdots((D'-\nu_1Y_1)|_{Y_1}-\nu_2Y_2)|_{Y_2}-\cdots-\nu_iY_i)|_{Y_{i}})$. By collecting the $\nu_i$, we obtain
$$
\nu_{Y_\bullet}(D'):=(\nu_1(D'),\nu_2(D'),\ldots,\nu_n(D')) \in \R^n_{\geq 0}.
$$
This defines a valuation-like function
$$
\nu_{Y_\bullet} \colon |D|_{\R}\to \R_{\geq0}^n.
$$
The \emph{Okounkov body} of $D$ with respect to $Y_\bullet$ is a convex subset of $\R^n$ defined as
$$
 \okbd_{Y_\bullet}(D):=\text{the closure of the convex hull of $\nu_{Y_\bullet}(|D|_{\R})$ in $\R^n_{\geq 0}$}.
$$
If $|D|_{\R} = \emptyset$, then we set $\okbd_{Y_\bullet}(D) := \emptyset$. If $D$ is big, then by \cite[Theorem A]{lm-nobody} we have
$$
\vol_{\R^n}(\okbd_{Y_\bullet}(D)) = \frac{1}{n!}\vol_X(D)~~\text{for every admissible flag $Y_\bullet$ on $X$}.
$$
The \emph{limiting Okounkov body} of $D$ with respect to $Y_\bullet$ is defined as
$$
\oklim_{Y_\bullet}(D):=\lim_{\varepsilon \to 0+} \okbd_{Y_\bullet}(D+\varepsilon A) = \bigcap_{\varepsilon > 0} \okbd_{Y_\bullet}(D+\varepsilon A).
$$
It is independent of the choice of the $A$, and $\oklim_{Y_\bullet}(D)=\okbd_{Y_\bullet}(D)$ when $D$ is big.
The limiting Okounkov bodies are numerical in nature; $D, D'$ are numerically equivalent pseudoeffective divisors if and only if $\oklim_{Y_\bullet}(D)=\oklim_{Y_\bullet}(D')$ for every admissible flag $Y_\bullet$.
It is easy to show that $\oklim_{Y_\bullet}(D)=\oklim_{Y_\bullet}(P_{\sigma}(D)) + \okbd_{Y_\bullet}(N_{\sigma}(D))$ when $D$ is pseudoeffective (cf. \cite[Lemma 3.3]{CPW2}).
For more details, we refer to \cite{CHPW}, \cite{KK}, \cite{lm-nobody}.

\subsection{Numerical dominance}
We say that $D$ \emph{numerically dominates} $V$ if there exists a birational morphism $\mu \colon W \to X$ with $\mu^{-1}\mathcal{I}_{V|X} \cdot \mathcal{O}_{W} = \mathcal{O}_W(E_V)$ for some effective divisor $E_V$ on $W$ such that for every positive number $b$ and every ample divisor $A$ on $W$, the divisor $x\mu^*D - yE_V+A$ is pseudoeffective for some $x,y > b$. For more details, we refer to \cite[Chapter 5]{nakayama}.

\section{Properties of numerical Iitaka dimensions}\label{sec:numiitakadim}

In this section, we study some properties of numerical Iitaka dimensions, and we prove Theorem \ref{thm:main} and Corollary \ref{cor:minmax}. By \cite[Proposition V.2.7 (1)]{nakayama}, $\kappa_{\sigma}(D)=\kappa_{\sigma, \sup}(D)$ depends only on the numerical equivalence class of $D$. The same proof shows that $\kappa_{\sigma, \inf}(D), \kappa_{\sigma, \inf}^{\R}(D), \kappa_{\sigma, \sup}^{\R}(D)$ are also numerical invariant of $D$. It is obvious from the definitions that all other numerical Iitaka dimensions considered in this paper are numerical invariants as well. Furthermore, it is easy to check that all the numerical Iitaka dimensions studied in this paper are birational invariant (see e.g. \cite[Propositions V.2.7 (4), V.2.22 (4)]{nakayama}).

\medskip

Let $X$ be a $\Q$-factorial normal projective variety, and $D$ be a pseudoeffective $\R$-divisor on $X$. By definition, we have $\nu_{\BDPP}(D) \geq 0$. It is also elementary to see that $\kappa(D) \leq \nu_{\BDPP}(D)$. In \cite{lehmann-nu}, Lehmann established that
\begin{equation}\label{eq:lehmann}
\nu_{\BDPP}(D)=\kappa_{\vol,\res}(D)=\kappa_{\vol,\zar}(D) \leq \kappa_{\sigma, \inf}(D) \text{ and } \kappa_{\vol,\zar}(D)\leq \kappa_{\vol, \inf}(D)
\end{equation}
(see the proof of \cite[Theorem 6.2]{lehmann-nu} for $(1)=(2)=(3)=(4)$, $(4) \leq (5)$, $(1) \leq (7)$). The inequality $\nu_{\BDPP}(D) \leq \kappa_{\nu}(D)$ was once claimed to be actually an equality by \cite{lehmann-nu}, \cite{E}. However, as was pointed out by Lesieutre \cite[Remark 4]{lesieutre}, the proof in \cite{lehmann-nu}, \cite{E} for this equality contains a serious gap. In fact, the strict inequality $\nu_{\BDPP}(D) < \kappa_{\nu}(D)$ can indeed hold (see \cite[Theorem 3]{lesieutre}). One of our goals in this paper is to clarify which numerical Iitaka dimensions coincide with each other and how other numerical Iitaka dimensions differ from the rest. First, we show the following:

\begin{proposition}\label{prop:ineqnumdim}
Let $X$ be a $\Q$-factorial normal projective variety, and $D$ be a pseudoeffective $\R$-divisor on $X$. Then we have
$$
\begin{array}{ccccccc}
&&\kappa_{\vol, \inf}^{\R}(D) & \leq & \kappa_{\vol, \sup}^{\R}(D) & \leq & \kappa_{\nu}(D)\\
&& \vleq & & \vleq &&\\
\nu_{\BDPP}(D) & \leq & \kappa_{\sigma, \inf}^{\R}(D) & \leq & \kappa_{\sigma, \sup}^{\R}(D).&&
\end{array}
$$
Similarly, we also have
$$
\begin{array}{ccccccc}
&&\kappa_{\vol, \inf}(D) & \leq & \kappa_{\vol, \sup}(D) & \leq & \kappa_{\nu}(D)\\
&& \vleq & & \vleq &&\\
\nu_{\BDPP}(D) & \leq & \kappa_{\sigma, \inf}(D) & \leq & \kappa_{\sigma, \sup}(D).&&
\end{array}
$$
\end{proposition}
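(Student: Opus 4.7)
The plan is to handle the six inequalities separately. Both horizontal inequalities $\kappa_{\vol,\inf}^{\R}(D) \leq \kappa_{\vol,\sup}^{\R}(D)$ and $\kappa_{\sigma,\inf}^{\R}(D) \leq \kappa_{\sigma,\sup}^{\R}(D)$ are immediate from $\liminf \leq \limsup$, and the corresponding integer-valued versions follow in the same way; moreover one always has $\kappa_{\star,\diamond}(D) \leq \kappa_{\star,\diamond}^{\R}(D)$ since any integer satisfying the defining condition for the integer version automatically lies in the defining set of the real-valued supremum. The inequality $\nu_{\BDPP}(D) \leq \kappa_{\sigma,\inf}^{\R}(D)$ then follows by combining Lehmann's $\nu_{\BDPP}(D) \leq \kappa_{\sigma,\inf}(D)$ from (\ref{eq:lehmann}) with this last observation.

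For the vertical inequalities $\kappa_{\vol,\diamond}^{\R}(D) \leq \kappa_{\sigma,\diamond}^{\R}(D)$, the approach is to compare $\vol_X(D+\varepsilon A)$ with $h^0(X, \lfloor mD\rfloor + A)$ through the substitution $\varepsilon = 1/m$. Since $D + \varepsilon A$ is big for every $\varepsilon > 0$, one has
\[
\vol_X(D+\varepsilon A) \;=\; \lim_{\ell\to\infty}\frac{n!\,h^0(X,\,\lfloor\ell(D+\varepsilon A)\rfloor)}{\ell^n}.
\]
Taking $\varepsilon = 1/m$ and $\ell = m$ identifies $\lfloor m(D + A/m)\rfloor$ with $\lfloor mD\rfloor + A$, which suggests the asymptotic comparison
\[
\frac{n!\,h^0(X,\,\lfloor mD\rfloor + A)}{m^n} \;\gtrsim\; \vol_X(D+A/m)
\]
up to lower-order corrections. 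Combined with the hypothesis $\vol_X(D+\varepsilon A) \geq c\varepsilon^{n-k}$, this then delivers $h^0(X, \lfloor mD\rfloor + A) \gtrsim m^k$ and hence $\kappa_{\sigma,\diamond}^{\R}(D) \geq k$; the $\liminf$ and $\limsup$ cases run in parallel.

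Finally, for $\kappa_{\vol,\sup}^{\R}(D) \leq \kappa_\nu(D)$, I would use the identity $\vol_X(D+\varepsilon A) = \deg\langle (D+\varepsilon A)^n\rangle$ and expand the positive intersection product in $\varepsilon$. The existence of a subvariety $W$ of dimension $\kappa_\nu(D)$ not numerically dominated by $D$ forces any coefficient of $\varepsilon^{n-k}$ in this expansion to vanish for $k > \kappa_\nu(D)$, because (via Fujita approximation) such a coefficient being positive would translate into a numerical domination of $W$, contradicting the hypothesis. This yields $\vol_X(D+\varepsilon A) \leq C\varepsilon^{n-\kappa_\nu(D)}$ for small $\varepsilon$ and hence $\kappa_{\vol,\sup}^{\R}(D) \leq \kappa_\nu(D)$.

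The main obstacle will be to justify the Fujita-type lower bound $h^0(X, \lfloor m B_m\rfloor) \geq \vol_X(B_m)\,m^n/n! - o(m^n)$ for the varying divisor $B_m = D + A/m$ uniformly in $m$, since $B_m$ approaches the boundary of the big cone as $m\to\infty$; a diagonal argument exploiting continuity of the volume function across the boundary should suffice.
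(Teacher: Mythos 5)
Your proposal gets the direction of the vertical inequalities backwards, and this is fatal. The rotated symbol $\vleq$ in the diagram asserts that the bottom entry is at most the top entry, i.e.\ $\kappa_{\sigma,\diamond}^{\R}(D) \leq \kappa_{\vol,\diamond}^{\R}(D)$ (this is also what the paper's proof explicitly reduces to, and it is consistent with the known chain $\nu_{\BDPP}(D) \leq \kappa_{\vol,\inf}(D)$ on the left column). You instead set out to prove $\kappa_{\vol,\diamond}^{\R}(D) \leq \kappa_{\sigma,\diamond}^{\R}(D)$: your chain of estimates starts from $\vol_X(D+\varepsilon A) \geq c\varepsilon^{n-k}$ and tries to deduce $h^0(X,\lfloor mD\rfloor + A)\gtrsim m^k$, which is exactly the wrong implication. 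The paper goes the other way: it proves $\vol_X(mD+2A) \geq h^0(X,\lfloor mD\rfloor + A)$ (Lemma \ref{lem:vol>h^0}, via the lattice-point count of the Okounkov body of $mD+A$ plus Minkowski addition with $\okbd_{Y_\bullet}(A)$), so a polynomial lower bound on $h^0$ transfers to a polynomial lower bound on volume. This inequality is what actually gives $\kappa_\sigma \leq \kappa_\vol$; your proposed inequality $n!\,h^0(X,\lfloor mD\rfloor + A)/m^n \gtrsim \vol_X(D+A/m)$ does not follow from the definition of volume when $m$ and the divisor parameter $1/m$ vary simultaneously, and — even if one patched the uniformity problem you flag — it would prove a statement the proposition does not assert.

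The remaining pieces are sound or partially so. The horizontal inequalities and $\kappa_{\star,\diamond}\leq\kappa_{\star,\diamond}^{\R}$ are indeed immediate, and your deduction of $\nu_{\BDPP}(D)\leq\kappa_{\sigma,\inf}^{\R}(D)$ from (\ref{eq:lehmann}) is fine. For $\kappa_{\vol,\sup}^{\R}(D)\leq\kappa_\nu(D)$, your sketch — that a positive $\varepsilon^{n-k}$ coefficient in the volume expansion would force numerical dominance of every $k$-dimensional subvariety — is the right moral statement, but it is not an argument: numerical dominance is a property of a single subvariety, while the volume asymptotics are an integrated quantity, so you must actually produce the dominance. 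The paper does this by following Nakayama's device: take $W$ a general complete intersection of hyperplane sections, filter $\mathcal{O}_X(\lfloor mxD\rfloor + mA)$ by powers of $\mathcal{I}_W$, and compare the count $h^0(X,\lfloor mxD\rfloor + mA) \gtrsim m^n x^k$ coming from the volume hypothesis against the $m^n x^w$ loss from restricting to $W$. That comparison is what makes the intuition precise, and it is not a trivial gap to fill in your sketch.
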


If $D$ is nef, then $\kappa_{\nu}(D) = \nu(D)$ by \cite[Proposition V.2.22 (5)]{nakayama}. Furthermore, in this case, Theorem \ref{thm:main} and Proposition \ref{prop:ineqnumdim} imply that all the numerical Iitaka dimensions of $D$ considered in this paper are equal to $\nu(D)$.

\medskip

To prove Proposition \ref{prop:ineqnumdim}, we need some lemmas. The following lemma shows that our definition of $\kappa_{\vol}(D)$ is the same as the definition in \cite{lehmann-nu}.

\begin{lemma}\label{lem:kappavol}
Let $X$ be a $\Q$-factorial normal projective variety of dimension $n$, and $D$ be an $\R$-divisor on $X$. Fix a sufficiently positive ample $\Z$-divisor $A$ on $X$. Then for any fixed real number $k \geq 0$, the following are equivalent:
\begin{enumerate}
 \item[$(1)$] There is a constant $C>0$ such that $\vol_X(D+tA) > Ct^{n-k}$ for all $t > 0$.
 \item[$(2)$] $\liminf\limits_{\varepsilon \to 0+} \frac{\vol_X(D+\varepsilon A)}{\varepsilon^{n-k}}>0$.
 \item[$(3)$] $\liminf\limits_{m \to \infty} \frac{\vol_X(mD+ A)}{m^k}>0$.
\end{enumerate}
\end{lemma}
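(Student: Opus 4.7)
The plan is to establish the cycle $(1) \Rightarrow (2) \Rightarrow (3) \Rightarrow (1)$, exploiting the homogeneity $\vol_X(mD+A) = m^n \vol_X(D+(1/m)A)$ of the volume, together with the fact that $\varepsilon \mapsto \vol_X(D+\varepsilon A)$ is monotone nondecreasing and continuous on the big cone. The implication $(1) \Rightarrow (2)$ is immediate, since the bound in $(1)$ is uniform in $t$, so in particular the liminf as $\varepsilon \to 0+$ is at least $C$.

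For the equivalence $(2) \Leftrightarrow (3)$, the substitution $\varepsilon = 1/m$ combined with homogeneity gives
$$
\frac{\vol_X(mD+A)}{m^k} \;=\; \frac{\vol_X(D+(1/m)A)}{(1/m)^{n-k}},
$$
so $(2)$ implies $(3)$ by restricting to the sequence $\varepsilon = 1/m$. For the converse, monotonicity ($\vol_X(L + \varepsilon A)$ does not decrease as $\varepsilon$ grows, since adding an ample class does not decrease volume) yields, for $\varepsilon \in (1/(m+1), 1/m]$,
$$
\frac{\vol_X(D+\varepsilon A)}{\varepsilon^{n-k}} \;\geq\; \Bigl(\frac{m}{m+1}\Bigr)^{n-k}\frac{\vol_X((m+1)D + A)}{(m+1)^k},
$$
assuming $0 \leq k \leq n$; passing to the liminf as $\varepsilon \to 0+$ (equivalently $m \to \infty$) gives $(3) \Rightarrow (2)$.

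The real content lies in $(2) \Rightarrow (1)$, where the hypothesis only controls small $\varepsilon$ and the conclusion is needed for all $t > 0$. I would split the range $(0,\infty)$ into three pieces. On $(0, \varepsilon_0]$, hypothesis $(2)$ directly provides a constant $C_1 > 0$ with $\vol_X(D+tA) > C_1 t^{n-k}$. On $[T,\infty)$ with $T$ sufficiently large, the identity $\vol_X(D+tA)/t^n = \vol_X(A + D/t)$ together with continuity of the volume on the big cone gives $\vol_X(D+tA)/t^n \to \vol_X(A) = A^n > 0$, hence $\vol_X(D+tA) \geq (A^n/2) t^n \geq (A^n/2) t^{n-k}$ using $k \geq 0$ and $t \geq 1$. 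On the compact middle interval $[\varepsilon_0, T]$, the divisor $D+tA$ is big (as it dominates the ample $\varepsilon_0 A$), so $\vol_X(D+tA)$ is a continuous positive function of $t$ and hence bounded below by a positive constant, while $t^{n-k}$ is bounded above on the compact interval. Taking $C$ to be the minimum of the three constants yields $(1)$.

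The main obstacle is assembling these three ranges, especially ensuring the transition at $T$ is legitimate; this is where continuity of the volume function on the big cone is essential. For the degenerate case $k > n$ both sides of $(1)$--$(3)$ are vacuously false, since $\vol_X(D+\varepsilon A)$ stays bounded as $\varepsilon \to 0+$ while $\varepsilon^{n-k} \to \infty$, so one may assume $0 \leq k \leq n$ without loss of generality throughout.
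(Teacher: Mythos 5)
Your proof is correct and follows essentially the same strategy as the paper's: the homogeneity $\vol_X(mD+A) = m^n\vol_X(D+\tfrac{1}{m}A)$ links $(2)$ and $(3)$, and the hard implication (recovering the uniform bound in $(1)$ from a liminf condition) is handled by splitting $(0,\infty)$ into small, middle, and large ranges and exploiting monotonicity and continuity of the volume. The only cosmetic differences are that the paper closes the cycle via $(3)\Rightarrow(1)$ while you prove $(2)\Rightarrow(1)$, and your middle-range argument invokes compactness rather than the paper's direct monotonicity comparison against $\vol_X(D+\delta A)$.
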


\begin{proof}
$(1) \Rightarrow (2)$: Trivial.

\medskip

\noindent $(2) \Rightarrow (3)$: Let us assume that $(2)$ holds. From the following observation
$$
\liminf\limits_{m \to \infty}  \frac{\vol_X(mD+ A)}{m^k} =\liminf\limits_{m \to \infty} \frac{m^n\vol_X\left(D+\frac{1}{m}A\right)}{m^k} = \liminf\limits_{m \to \infty} \frac{\vol_X \left(D+\frac{1}{m}A\right)}{\left(\frac{1}{m} \right)^{n-k}} > 0,
$$
we obtain $(3)$.

\medskip

\noindent $(3) \Rightarrow (1)$: Finally, let us assume that $(3)$ holds. There are constants $C', c'>0$ such that $\vol_X(mD+A) > c' m^k$ and $c' (m-1)^{n-k} > C' m^{n-k}$ for any sufficiently large integer $m> 0$. 
Fix a sufficiently small number $\delta > 0$. For any $t$ with $0<t<\delta$, there exists an integer $m$ such that $1/m \leq t < 1/(m-1)$. As $t$ is sufficiently small, we may assume that $m$ is sufficiently large.
Then we have
$$
\vol_X(D+tA) \geq \vol_X\left(D + \frac{1}{m}A \right) = \frac{\vol_X(mD+A)}{m^n} >  \frac{c'}{m^{n-k}} >  \frac{C'}{(m-1)^{n-k}} > C' t^{n-k}.
$$
Next, if $\delta \leq t < 1$, then
$$
\vol_X(D+tA) \geq \vol_X(D+\delta A) > \vol_X(D+\delta A) t^{n-k}.
$$
Fix a sufficiently small number $\varepsilon > 0$.
If $t \geq1$, then
$$
\vol_X(D+tA) = t^n \vol_X \left( \frac{1}{t}D + A \right) > t^n \vol_X((1-\varepsilon) A) \geq \vol_X((1-\varepsilon)A) t^{n-k}.
$$
By taking $C:= \min \{ C', \vol_X(D+\delta A), \vol_X((1-\varepsilon) A ) \}$, we obtain $(1)$.
\end{proof}

\begin{lemma}\label{lem:vol>h^0}
Let $X$ be a $\Q$-factorial normal projective variety, $D$ be a pseudoeffective $\R$-divisor on $X$, and $A$ be a sufficiently positive ample $\Z$-divisor on $X$. Then we have
$$
\vol_{X}(mD+2A) \geq h^0(X, \lfloor mD \rfloor +A)~~\text{for any integer $m > 0$}.
$$
\end{lemma}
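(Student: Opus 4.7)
The plan is to combine an elementary sheaf-level inclusion with the asymptotic definition of the volume. Without loss of generality, one may replace $A$ by a linearly equivalent effective $\Z$-divisor, since this alters neither side. Write $G := \lfloor mD \rfloor + A$, $L := mD + 2A$, and let $\{mD\} := mD - \lfloor mD \rfloor$ denote the coefficient-wise effective fractional part of $mD$.

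The key observation is the coefficient-by-coefficient identity $\lfloor kx \rfloor = k\lfloor x \rfloor + \lfloor k\{x\}\rfloor$ for any real $x$, which applied to each coefficient of $mD$ yields the $\Z$-divisor identity
$$\lfloor kL \rfloor - kG = kA + \lfloor k\{mD\}\rfloor \qquad \text{for every integer } k \geq 1.$$
Since the right-hand side is an effective $\Z$-divisor (both summands are), multiplication by a nonzero global section of $\mc O_X(kA + \lfloor k\{mD\}\rfloor)$ induces an injection
$$H^0(X, kG) \hookrightarrow H^0(X, \lfloor kL \rfloor).$$
Dividing by $k^n/n!$ and letting $k \to \infty$ (using that $L$ and $G$ are both big, so the relevant limits compute $\vol_X$) yields $\vol_X(G) \leq \vol_X(L)$ as a byproduct.

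To upgrade this to the desired inequality $h^0(X, G) \leq \vol_X(L)$, one invokes the sufficient positivity of $A$. Choosing $A$ so that $A - K_X$ is ample (and after passing, if necessary, to a birational model to replace $G$ by its positive part $P_\sigma(G)$, which is nef), Kodaira vanishing gives $h^0(X, G) = \chi(X, G)$, and asymptotic Riemann--Roch expresses this as $G^n/n!$ plus lower-order corrections in the numerical class of $G$. On the other hand, since $L = G + A + \{mD\}$ and the second summand is ample plus effective, expanding $(G + A)^n = G^n + nG^{n-1}\cdot A + \cdots + A^n$ yields a lower bound on $\vol_X(L)$ whose ample-driven terms dominate the Riemann--Roch corrections when $A$ is sufficiently positive, giving $\chi(X, G) \leq \vol_X(L)$.

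The main obstacle is the last step: uniformly absorbing the lower-order Riemann--Roch corrections for \emph{all} integers $m \geq 1$, particularly when $\lfloor mD \rfloor$ is not nef. This necessitates passing to a suitable birational model and working with the positive part $P_\sigma$, where Fujita approximation makes the intersection-theoretic comparison between $\chi$-corrections and the ample contribution of $A$ explicit and uniform in $m$.
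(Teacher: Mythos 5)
Your first step (the sheaf-level injection $H^0(X,kG)\hookrightarrow H^0(X,\lfloor kL\rfloor)$, hence $\vol_X(G)\le\vol_X(L)$) is correct, but it does not advance the goal: the lemma compares a \emph{single} section count $h^0(X,G)$ with an \emph{asymptotic} quantity $\vol_X(L)$, and $h^0(X,G)\le\vol_X(G)$ is simply false in general, so the chain you would need does not exist. The real content is therefore entirely concentrated in your second step, and that step has a genuine gap. Kodaira (or Kawamata--Viehweg) vanishing for $G=\lfloor mD\rfloor+A$ would require $G-K_X$ (equivalently $mD+(A-K_X-\{mD\})$) to be nef and big; since $D$ is merely pseudoeffective, $mD$ need not be nef for any $m$, and adding a fixed ample $A$ cannot remedy this uniformly in $m$. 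Passing to a birational model and replacing $G$ by $P_\sigma(\phi^*G)$ does not fix this either: the positive part is movable, not nef, on any finite model (Zariski decompositions do not exist in dimension $\ge 3$), so there is no model on which vanishing applies to the relevant divisor. Your own closing paragraph flags this as ``the main obstacle'' and gestures at Fujita approximation, but Fujita approximation only controls volumes asymptotically and gives no handle on the lower-order Euler-characteristic corrections of a fixed $G$ that you would need to dominate by the ample contribution, let alone uniformly over all $m$. So the proposal is incomplete at precisely the step that carries the mathematical weight.

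For comparison, the paper's proof avoids intersection theory and vanishing altogether. It fixes an admissible flag $Y_\bullet$, uses the Lazarsfeld--Musta\c{t}\u{a} bound $h^0(X,\lfloor mD\rfloor+A)\le\#\bigl(\okbd_{Y_\bullet}(mD+A)\cap\Z^n\bigr)$, then observes that translating a unit simplex $\Delta\subseteq\okbd_{Y_\bullet}(A)$ to each lattice point packs $\okbd_{Y_\bullet}(mD+A)+\Delta$ with disjoint-interior cells of volume $1/n!$, and finally uses subadditivity $\okbd_{Y_\bullet}(mD+2A)\supseteq\okbd_{Y_\bullet}(mD+A)+\okbd_{Y_\bullet}(A)$ together with $\vol_X(mD+2A)=n!\,\vol_{\R^n}\bigl(\okbd_{Y_\bullet}(mD+2A)\bigr)$. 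This turns the single-$h^0$-versus-volume comparison into a lattice-point-count-versus-Euclidean-volume comparison, which is exactly where the inequality is robust and uniform in $m$. If you want to salvage your approach, that translation is the idea you are missing.
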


\begin{proof}
Let $n:=\dim X$, and fix an admissible flag $Y_\bullet$ on $X$. Note that $\okbd_{Y_\bullet}(A)$ contains a standard simplex $\Delta$ of size $1$ in $\mathbb R^n$.
For each integer $m \geq 1$, let $\Delta_m:=\okbd_{Y_\bullet}(mD+A)$. By \cite[Lemma 1.4]{lm-nobody}, we have
\begin{equation*}\label{eq:num>=h0}
\# (\Delta_m \cap \Z^n) \geq h^0(X, \lfloor mD \rfloor +A).
\end{equation*}
Notice now that
$$
\Delta_m + \Delta \supseteq \bigcup_{\mathbf{x} \in \Delta_m \cap \Z^n} \left(\{\mathbf{x}\}+\Delta \right).
$$
As the interiors of $\{\mathbf{x}\}+\Delta$ are disjoint and $\vol_{\R^n}(\{\mathbf{x}\}+\Delta)=\vol_{\R^n}(\Delta)=1/n!$, we get
$$
n! \cdot \vol_{\R^n} (\Delta_m + \Delta) \geq \# (\Delta_m \cap \Z^n).
$$
Note that $\Delta_{Y_\bullet}(mD+2A)\supseteq\Delta_{Y_\bullet}(mD+A)+\Delta_{Y_\bullet}(A)\supseteq\Delta_m+\Delta$ and $\vol_X(mD+2A)=n!\cdot \vol_{\R^n}\big(\Delta_{Y_\bullet}(mD+2A))$. Thus we obtain
$$
\vol_X(mD+2A) \geq n! \cdot \vol_{\R^n}(\Delta_m + \Delta) \geq \# (\Delta_m \cap \Z^n) \geq  h^0(X, \lfloor mD \rfloor +A).
$$
We finish the proof.
\end{proof}

We now prove Proposition \ref{prop:ineqnumdim}.

\begin{proof}[Proof of Proposition \ref{prop:ineqnumdim}]
Let $n:=\dim X$. In this proof, $\star = \sigma$ or $\vol$, and $\diamond = \inf$ or $\sup$. By definition, we have
$$
\kappa_{\star, \diamond}(D) \leq \kappa_{\star, \diamond}^{\R}(D),~\kappa_{\star, \inf}^{\R}(D) \leq \kappa_{\star, \sup}^{\R}(D),~\kappa_{\star, \inf}(D) \leq \kappa_{\star, \sup}(D).
$$
Thus it is sufficient to show the following four inequalities:
$$
\nu_{\BDPP}(D) \leq \kappa_{\sigma, \inf}(D),~\kappa_{\sigma, \diamond}^{\R}(D) \leq \kappa_{\vol, \diamond}^{\R}(D),~\kappa_{\sigma, \diamond}(D) \leq \kappa_{\vol, \diamond}(D), ~\kappa_{\vol, \sup}^{\R}(D) \leq \kappa_{\nu}(D).
$$

\medskip

\noindent  $\text{\tiny$\bullet$}~ \nu_{\BDPP}(D) \leq \kappa_{\sigma, \inf}(D)$: This is established in (\ref{eq:lehmann}) by \cite{lehmann-nu}.

\medskip

\noindent $\text{\tiny$\bullet$}~\kappa_{\sigma, \diamond}^{\R}(D) \leq \kappa_{\vol, \diamond}^{\R}(D),~\kappa_{\sigma, \diamond}(D) \leq \kappa_{\vol, \diamond}(D)$:
Let $A$ be a sufficiently positive ample $\Z$-divisor on $X$. By Lemma \ref{lem:vol>h^0}, we have
$$
\frac{\vol_{X}(mD+2A)}{m^k}  \geq \frac{h^0(X, \lfloor mD \rfloor +A)}{m^k}
$$
for any integer $m > 0$ and any real number $k\geq 0$. Then Lemma \ref{lem:kappavol} implies that $\kappa_{\sigma, \inf}^{\R}(D) \leq \kappa_{\vol, \inf}^{\R}(D)$ and $\kappa_{\sigma, \inf}(D) \leq \kappa_{\vol, \inf}(D)$.
Now, we observe that
$$
\frac{\vol_X\left(D + \frac{2}{m}A \right)}{\left(\frac{1}{m} \right)^{n-k}} = \frac{m^n \vol_X\left( D+ \frac{2}{m} A \right)}{m^k} = \frac{\vol_{X}(mD+2A)}{m^k}.
$$
Then we find
$$
\limsup_{\varepsilon \to 0} \frac{\vol_X(D+\varepsilon\cdot 2A)}{\varepsilon^{n-k}} \geq \limsup_{m \to \infty} \frac{\vol_X\left(D + \frac{2}{m}A \right)}{\left(\frac{1}{m} \right)^{n-k}} =  \limsup_{m \to \infty}\frac{\vol_{X}(mD+2A)}{m^k}.
$$
This implies that $\kappa_{\sigma, \sup}^{\R}(D) \leq \kappa_{\vol, \sup}^{\R}(D)$ and $\kappa_{\sigma, \sup}(D) \leq \kappa_{\vol, \sup}(D)$.

\medskip

\noindent $\text{\tiny$\bullet$}~ \kappa_{\vol, \sup}^{\R}(D) \leq \kappa_{\nu}(D)$:
We closely follow \cite[Proof of Proposition V.2.22 (1)]{nakayama}.
Take any nonnegative integer $w$ with $w < \kappa_{\vol, \sup}^{\R}(D)$.
Let $A$ be a very ample divisor on $X$, and $W \subseteq X$ be a complete intersection of $(n-w)$ general members of $|A|$. Denote by $\mathcal{I}_W$ the ideal sheaf of $W$ in $X$. Note that $N_{W/X}^* \cong \mathcal{O}_W(-A)^{\oplus n-w}$. Fix an integer $b > 0$.
For any integer $m>0 $ and real number $x > 0$, we have an exact sequence
$$
\begin{array}{l}
0 \to H^0(X, \mathcal{I}_W^{mb+1} \cdot \mathcal{O}_X(\lfloor mxD \rfloor + mA) \to H^0(X, \mathcal{I}_W^{mb} \cdot \mathcal{O}_X(\lfloor mxD \rfloor +mA)\\
 \to H^0(W, S^{mb}(N_{W/X}^*) \otimes \mathcal{O}_W(\lfloor mxD \rfloor +mA) ).
\end{array}
$$
Then we get
\begin{footnotesize}
$$
h^0(X, \mathcal{I}_W^{mb+1} \cdot \mathcal{O}_X(\lfloor mxD \rfloor + mA)) \geq h^0(X, \lfloor mxD \rfloor + mA) - {n-w+mb \choose n-w} h^0(W, (\lfloor mxD \rfloor + mA)|_W).
$$
\end{footnotesize}\\[-10pt]
Now, fix a real number $k$ with $w < k < \kappa_{\vol, \sup}^{\R}(D)$. There exists a constant $C>0$ such that $\vol_X(xD+A) > C x^k$ for every $x \in S$, where $S$ is a set of some sufficiently large positive real numbers with $\sup S = \infty$. Then there is a constant $C'>0$ such that
$$
h^0(X, \lfloor mxD \rfloor + mA) > C' m^n x^k.
$$
for every sufficiently large integer $m>0$ and $x \in S$. On the other hand, there exists a constant $C'' > 0$ such that
$$
{n-w+mb \choose n-w} h^0(W, (\lfloor mxD \rfloor + mA)|_W) < C'' (mb)^{n-w} (mx)^w = (C'' b^{n-w}) m^n x^w,
$$
for every sufficiently large integer $m>0$ and $x \in S$. Thus we obtain $h^0(X, \mathcal{I}_W^{mb+1} \cdot \mathcal{O}_X(\lfloor mxD \rfloor + mA)) > 0$. This means that $D$ numerically dominates $W$. Hence $\kappa_{\nu}(D) > w$, and therefore, $\kappa_{\nu}(D) \geq \lceil \kappa_{\vol, \sup}^{\R}(D) \rceil \geq \kappa_{\vol, \sup}^{\R}(D)$.
\end{proof}

By the main example in \cite[Section 3]{lesieutre}, the strict inequalities $\nu_{\BDPP}(D) < \kappa_{\sigma, \inf}^{\R}(D)$ and $\kappa_{\vol, \sup}^{\R}(D) < \kappa_{\nu}(D)$ can indeed hold. It is natural to ask the following.

\begin{question}[{cf. \cite[Remark 9]{lesieutre}}]\label{ques:sigma=vol?}
Let $X$ be a $\Q$-factorial normal projective variety, and $D$ be a pseudoeffective $\R$-divisor on $X$. Do the following equalities hold?:
\begin{small}
\begin{equation*}\label{eq:sigma=vol2}
\kappa_{\sigma, \inf}^{\R}(D) = \kappa_{\vol, \inf}^{\R}(D),~\kappa_{\sigma, \sup}^{\R}(D)=\kappa_{\vol, \sup}^{\R}(D),~\kappa_{\sigma, \inf}(D) = \kappa_{\vol, \inf}(D),~\kappa_{\sigma, \sup}(D)=\kappa_{\vol, \sup}(D).
\end{equation*}
\end{small}
\end{question}

\begin{remark}\label{rem:newcounterex}
We are informed from John Lesieutre that he recently constructed an example of $\kappa_{\sigma, \inf}^{\R}(D) \neq \kappa_{\sigma, \sup}^{\R}(D)$.
\end{remark}

Next, we turn to the proof of Theorem \ref{thm:main}, which claims the following equalities:
$$
\nu_{\BDPP}(D)=\kappa_{\vol, \res}(D)=\kappa_{\vol,\zar}(D)= \kappa_{\vol, \inf, \bir}^{\R}(D) = \kappa_{\vol, \sup, \bir}^{\R}(D)  = \kappa_{\Delta}(D).
$$

\begin{proof}[Proof of Theorem \ref{thm:main}]
Let $X$ be a $\Q$-factorial normal projective variety of dimension $n$, and $D$ be a pseudoeffective $\R$-divisor on $X$.
Recall from (\ref{eq:lehmann}) that the equalities $\nu_{\BDPP}(D)=\kappa_{\vol,\res}(D)=\kappa_{\vol,\zar}(D)$ were shown by Lehmann. By definition, $\kappa_{\vol, \inf, \bir}^{\R}(D) \leq \kappa_{\vol, \sup, \bir}^{\R}(D)$ holds. We will first show that $$\nu_{\BDPP}(D) \leq \kappa_{\vol, \inf, \bir}^{\R}(D) \text{ and } \kappa_{\vol, \sup, \bir}^{\R}(D) \leq \kappa_{\vol, \res}(D),$$
thereby showing that the first five numerical Iitaka dimensions coincide:
$$
\nu_{\BDPP}(D)=\kappa_{\vol, \res}(D)=\kappa_{\vol,\zar}(D)=  \kappa_{\vol, \inf, \bir}^{\R}(D)=\kappa_{\vol, \sup, \bir}^{\R}(D).
$$
The inequality $\kappa_{\vol, \res}(D) \leq \kappa_{\Delta}(D)$ was shown in \cite[Proof of Proposition 3.21]{CHPW}. We complete the proof finally by showing that $\kappa_{\Delta}(D) \leq \kappa_{\vol, \inf, \bir}^{\R}(D)$.

\medskip

\noindent $\text{\tiny$\bullet$}~ \nu_{\BDPP}(D) \leq \kappa_{\vol, \inf, \bir}^{\R}(D)$:
This inequality can be similarly shown as in the proof of \cite[Theorem 6.2]{lehmann-nu} for $(7) \leq (1)$, but we present the detailed proof for readers' convenience. Let $k:=\nu_{\BDPP}(D)$.
Fix a sufficiently positive ample $\Z$-divisor $A$ on $X$, and let $\phi \colon \widetilde{X} \to X$ be a birational morphism with $\widetilde{X}$ smooth projective. Then there exists a constant $C>0$ such that $C<\langle (P_{\sigma}(\phi^*D) + \varepsilon \phi^*A)^k \rangle \cdot \phi^*A^{n-k}$ for every $\varepsilon > 0$. We have
\begin{equation}\label{eq:vol,birlower}
\begin{array}{rcl}
C\varepsilon^{n-k} &<& \varepsilon^{n-k} \langle (P_{\sigma}(\phi^*D) + \varepsilon \phi^*A)^k \rangle \cdot \phi^*A^{n-k}\\
& =& \langle (P_{\sigma}(\phi^*D) + \varepsilon \phi^*A)^k \cdot (\varepsilon \phi^*A)^{n-k} \rangle \\
&\leq& \langle (P_{\sigma}(\phi^*D) + \varepsilon \phi^*A)^n \rangle\\
&=& \vol_{\widetilde{X}}(P_{\sigma}(\phi^*D) + \varepsilon \phi^*A)
\end{array}
\end{equation}
for every $\varepsilon > 0$. Thus the inequality $\kappa_{\BDPP}(D)=k \leq \kappa_{\vol, \inf, \bir}^{\R}(D)$ holds.

\medskip

\noindent $\text{\tiny$\bullet$}~\kappa_{\vol, \sup, \bir}^{\R}(D) \leq \kappa_{\vol, \res}(D)$:
Let $k:=\kappa_{\vol, \res}(D)$.
By \cite[Proposition 3.8]{lehmann-nu}, for each integer $m \geq 1$, there are a birational morphism
$$
\phi_m \colon X_m \to X
$$
centered in $\bm(D)$, a nef and big divisor $M_m$ on $X_m$, an ample $\Z$-divisor $H$ on $X$, and an effective divisor $G$ on $X$ such that
$$
M_m \leq \frac{1}{m}P_{\sigma}(\phi_m^*(\lceil mD \rceil + H)) \leq M_m + \frac{1}{m}\phi_m^*G.
$$
Fix a sufficiently positive very ample $\Z$-divisor $A$ on $X$. For any number $\varepsilon > 0$, we have
$$
P_{\sigma}(\phi_m^*D) + \varepsilon \phi_m^* A \leq \frac{1}{m}P_{\sigma}(\phi_m^*(\lceil mD \rceil + H)) + \varepsilon \phi_m^*A \leq M_m + \frac{1}{m}\phi_m^*G + \varepsilon \phi_m^*A,
$$
thus we obtain
\begin{equation}\label{eq:volbir}
\begin{array}{rcl}
\displaystyle \lim_{m \to \infty} \vol_{X_m} (P_{\sigma}(\phi_m^*D) + \varepsilon \phi_m^* A )& \leq & \displaystyle \lim_{m \to \infty}  \vol_{X_m} \left(M_m + \frac{1}{m}\phi_m^*G + \varepsilon \phi_m^*A \right) \\
&=& \displaystyle \lim_{m \to \infty} \vol_{X_m} (M_m + \varepsilon \phi_m^*A) \\
&=&  \displaystyle \lim_{m \to \infty} (M_m+\varepsilon \phi_m^* A)^n \\
    &=& \displaystyle \lim_{m \to \infty} \sum_{i=0}^n {n \choose i} \varepsilon^i M_m^{n-i} (\phi_m^* A)^i .
\end{array}
\end{equation}
Let $W_i$ be a complete intersection of general $i$ members in $|A|$ for $0 \leq i \leq n$, and $W_{i,m}$ be the strict transform via $\phi_m$. Then
\begin{align*}
M_m^{n-i} (\phi_m^* A)^i &= \vol_{X_m|W_{i,m}}(M_m)\\
 &\leq  \vol_{X_m|W_{i,m}}\left(\frac{1}{m}P_{\sigma}(\phi_m^*(\lceil mD \rceil + H)) \right)\\
 & = \vol_{X|W_i}\left( \frac{1}{m} (\lceil mD \rceil + H) \right).
 \end{align*}
Notice that
$$
\lim_{m \to \infty}  \vol_{X|W_i}\left( \frac{1}{m} (\lceil mD \rceil + H) \right) = \vol_{X|W_i}^+(D).
$$
Thus we get
$$
\lim_{m \to \infty} M_m^{n-i} (\phi_m^* A)^i  = 0~~\text{when $n-i = \dim W_i \geq k+1= \kappa_{\vol, \res}(D)+1$}.
$$
From (\ref{eq:volbir}), we obtain
\begin{equation}\label{eq:vol,birupper}
 \begin{array}{rl}
\displaystyle \lim_{m \to \infty} \vol_{X_m} (P_{\sigma}(\phi_m^*D) + \varepsilon \phi_m^* A ) &\leq \varepsilon^{n-k} \displaystyle\sum_{i =0}^n {n \choose i} \varepsilon^{i-n+k} M_m^{n-i} (\phi_m^* A)^i\\
 &=\varepsilon^{n-k} \displaystyle\sum_{i = n-k}^n {n \choose i} \varepsilon^{i-n+k} M_m^{n-i} (\phi_m^* A)^i
 \end{array}
\end{equation}
This shows that
$$
\limsup_{\varepsilon \to 0}  \lim_{m \to \infty} \frac{ \vol_{X_m} (P_{\sigma}(\phi_m^*D) + \varepsilon \phi_m^* A )}{\varepsilon^{n-k-\delta}} = 0
$$
for every $\delta > 0$. Hence $\kappa_{\vol, \sup, \bir}^{\R}(D) \leq k=\kappa_{\vol,\res}(D)$.

\medskip

\noindent $\text{\tiny$\bullet$}~ \kappa_{\Delta}(D) \leq \kappa_{\vol, \inf, \bir}^{\R}(D)$:
By definition, there exists an admissible flag $Y_\bullet$ on $X$ such that $k:=\kappa_{\Delta}(D)=\dim \oklim_{Y_\bullet}(D)$. It is enough to prove that
\begin{equation}\label{eq:oklimcomp}
\frac{1}{n!} \vol_{\widetilde{X}}(P_{\sigma}(\phi^*D) + \varepsilon \phi^*A)  \geq \vol_{\R^k}(\oklim_{Y_\bullet}(D)) \cdot \varepsilon^{n-k}
\end{equation}
for any sufficiently small number $\varepsilon > 0$, where $A$ is a fixed sufficiently positive ample $\Z$-divisor and $\phi \colon \widetilde{X} \to X$ is an arbitrary birational morphism $\phi \colon \widetilde{X} \to X$ from a smooth projective variety $\widetilde{X}$.
By taking sufficient blow-ups, we obtain a $Y_\bullet$-admissible birational morphism $\phi$ in the sense of \cite[Definition 3.5]{CPW2}. This simply implies that we can form an admissible flag $\widetilde{Y}_\bullet$ on $\widetilde{X}$ by taking proper transforms $\widetilde{Y}_i$ of each $Y_i$ of $Y_\bullet$ so that $\widetilde{Y}_\bullet$ is an induced proper admissible flag in the sense of \cite[Definition 3.4]{CPW2}. By \cite[Lemma 3.7]{CPW2}, we get
$$
\dim \oklim_{\widetilde{Y}_\bullet}(\phi^* D) = \dim \oklim_{Y_\bullet}(D)=k~~\text{and}~~\vol_{\R^k}\big(\oklim_{\widetilde{Y}_\bullet}(\phi^*D)\big)  = \vol_{\R^k}(\oklim_{Y_\bullet}(D)).
$$
As $\oklim_{\widetilde{Y}_\bullet}(\phi^*D) = \oklim_{\widetilde{Y}_\bullet}(P_{\sigma}(\phi^*D))$, the subadditivity of Okounkov bodies shows that
$$
\okbd_{\widetilde{Y}_\bullet}(P_{\sigma}(\phi^*D)+\varepsilon \phi^*A) \supseteq \oklim_{\widetilde{Y}_\bullet}(P_{\sigma}(\phi^*D)) +  \okbd_{\widetilde{Y}_\bullet}(\varepsilon \phi^*A) = \oklim_{\widetilde{Y}_\bullet}(\phi^*D) +\varepsilon\okbd_{\widetilde{Y}_\bullet}( \phi^*A).
$$
Since $A$ is sufficiently positive, it follows that $\okbd_{\widetilde{Y}_\bullet}( \phi^*A)$ contains a sufficiently large convex body. Thus we obtain
\begin{align*}
\vol_{\R^n}(\okbd_{\widetilde{Y}_\bullet}(P_{\sigma}(\phi^*D)+\varepsilon \phi^*A)  &\geq \vol_{\R^n}( \oklim_{\widetilde{Y}_\bullet}(\phi^*D) +\varepsilon\okbd_{\widetilde{Y}_\bullet}( \phi^*A)) \\
&\geq \vol_{\R^k}(\oklim_{Y_\bullet}(D)) \cdot \varepsilon^{n-k}
\end{align*}
which implies (\ref{eq:oklimcomp}). Hence $\kappa_{\vol, \inf, \bir}^{\R}(D) \geq k=\kappa_{\Delta}(D)$.
\end{proof}

\begin{remark}
In \cite{CHPW, CPW}, the notation $\kappa_{\nu}$ was used to denote the numerical Iitaka dimension $\kappa_{\sigma}$. All the results in \cite{CHPW, CPW} hold by replacing $\kappa_{\nu}$ by $\nu_{\BDPP}$.
\end{remark}

It would be also interesting to know whether the fundamental inequalities (\ref{eq:iitakadim}) for Iitaka dimensions have analogues for numerical Iitaka dimensions.

\begin{question}[{cf. \cite[Conjecture 1.4]{F}, \cite[Remark 5]{lesieutre}, \cite[Question 4.1]{M}}]\label{ques:fundineq}
Let $X$ be a $\Q$-factorial normal projective variety, and $D$ be a pseudoeffective $\R$-divisor on $X$.\\[3pt]
$(1)$ Do there exist constants $C_1, C_2 > 0$ such that
$$
C_1 m^{\kappa_{\sigma, \inf}^{\R}(D)} < h^0(X, \lfloor mD \rfloor + A) < C_2 m^{\kappa_{\sigma, \sup}^{\R}(D)}
$$
for every sufficiently large integer $m > 0$?\\[3pt]
$(2)$ Do there exist constants $C_1, C_2 > 0$ such that
$$
C_1 \varepsilon^{n-\kappa_{\vol, \inf}^{\R}(D)} < \vol_X (D+\varepsilon A) < C_2 \varepsilon^{n-\kappa_{\vol, \sup}^{\R}(D)}
$$
for every sufficiently small number $\varepsilon > 0$?
\end{question}

At this moment, as stated in Corollary \ref{cor:minmax}, we can only show that the inequalities in $(2)$ hold on the limit of higher birational models.

\begin{proof}[Proof of Corollary \ref{cor:minmax}]
The existence of lower bound follows from  (\ref{eq:vol,birlower})  in the proof of Theorem \ref{thm:main}. For a given birational morphism $\phi \colon \widetilde{X} \to X$, we may replace $\phi$ by a common resolution of $\phi$ and $\phi_m$ in  the proof of $\kappa_{\vol, \sup, \bir}^{\R}(D) \leq \kappa_{\vol, \res}(D)$ in Theorem \ref{thm:main}. Thus the existence of the upper bound follows similarly from (\ref{eq:vol,birupper}) in the same proof.
\end{proof}

Let $X$ be a $\Q$-factorial normal projective variety, and $D$ be a pseudoeffective $\R$-divisor on $X$.  It is highly desirable that the positive part $P_\sigma(D)$ in the divisorial Zariski decomposition of $D$ computes each of the numerical Iitaka dimension of $D$. It is easy to see that
$$
\nu_{\BDPP}(D)=\nu_{\BDPP}(P_{\sigma}(D)).
$$
Although expected, this property is unknown for other numerical Iitaka dimensions.

\begin{question}[{cf. \cite[Problem in p.165]{nakayama}}]\label{ques:numdimdivzar}
Let $X$ be a $\Q$-factorial normal projective variety, and $D$ be a pseudoeffective $\R$-divisor on $X$. Do the following equalities hold?
$$
\kappa_{\star, \diamond}^{\R}(D)=\kappa_{\star, \diamond}^{\R}(P_{\sigma}(D)),~\kappa_{\star, \diamond}(D)=\kappa_{\star, \diamond}(P_{\sigma}(D)),~\kappa_{\nu}(D) = \kappa_{\nu}(P_{\sigma}(D)),
$$
where $\star= \sigma$ or $\vol$ and $\diamond = \inf$ or $\sup$.
\end{question}

As was remarked by Nakayama \cite[p.165]{nakayama}, the affirmative answer to the following question gives a positive answer to Question \ref{ques:numdimdivzar}.

\begin{question}[{cf. \cite[Problem V.1.8]{nakayama}}]
Let $X$ be a $\Q$-factorial normal projective variety, and $D$ be a pseudoeffective $\R$-divisor on $X$. For an ample $\Z$-divisor $A$ on $X$ and a prime divisor $\Gamma$ on $X$, is $t \ord_{\Gamma}(||D||) - \ord_{\Gamma}(||tD+A||)$ bounded for $t>0$?
\end{question}

Finally, we study the cases where $\nu_{\BDPP}(D)=0,n-1,$ or $n$.

\begin{proposition}\label{prop:numdim=0,n-1,n}
Let $X$ be a $\Q$-factorial normal projective variety of dimension $n$, and $D$ be a pseudoeffective $\R$-divisor on $X$. Then we have the following:
\begin{enumerate}
 \item[$(1)$] $\nu_{\BDPP}(D)=0 ~\Leftrightarrow~ P_{\sigma}(D) \equiv 0 ~\Leftrightarrow~ \kappa_{\nu}(D)=0$.
 \item[$(2)$] $\nu_{\BDPP}(D) = n-1 ~\Leftrightarrow~ \kappa_{\vol, \sup}(D)=n-1$.
 \item[$(3)$] $\nu_{\BDPP}(D)=n~\Leftrightarrow~ \text{$D$ is big} ~\Leftrightarrow~ \kappa_{\nu}(D)=n$.\end{enumerate}
In particular, if any one of $\nu_{\BDPP}(D), \kappa_{\sigma}(D), \kappa_{\vol}(D)$ is $0, n-1$, or $n$, then the rest of them also coincide with the same number.
\end{proposition}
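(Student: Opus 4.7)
The plan is to handle the three extreme values $0$, $n-1$, $n$ of $\nu_{\BDPP}(D)$ separately, and then derive the ``in particular'' clause by sandwiching $\kappa_{\sigma}(D)$ and $\kappa_{\vol}(D)$ between $\nu_{\BDPP}(D)$ and the upper bounds $\kappa_{\vol,\sup}(D),\kappa_{\nu}(D)$ coming from Proposition~\ref{prop:ineqnumdim}.

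Part (3) is immediate: $\nu_{\BDPP}(D)=n\Leftrightarrow D$ is big is just $\langle D^n\rangle=\vol(D)$, and $D$ big $\Leftrightarrow\kappa_{\nu}(D)=n$ is Nakayama's \cite[Proposition V.2.22]{nakayama}. For part (1), \cite[Proposition 4.13]{lehmann-nu} gives $\langle D\rangle=\langle P_{\sigma}(D)\rangle$ as a class in $N^1(X)$, and since $P_{\sigma}(D)$ is movable this positive product equals $P_{\sigma}(D)$ itself, so $\nu_{\BDPP}(D)=0\Leftrightarrow P_{\sigma}(D)\equiv 0$. The equivalence $P_{\sigma}(D)\equiv 0\Leftrightarrow\kappa_{\nu}(D)=0$ combines $\nu_{\BDPP}\leq\kappa_{\nu}$ from Proposition~\ref{prop:ineqnumdim} with Nakayama's result that a divisor numerically equivalent to its rigid negative part $N_{\sigma}(D)$ fails to numerically dominate a very general point.

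The step I expect to be the main obstacle is the reverse direction of (2). The forward direction is easy: if $\nu_{\BDPP}(D)=n-1$, then Proposition~\ref{prop:ineqnumdim} forces $\kappa_{\vol,\sup}(D)\geq n-1$, while (3) rules out the value $n$ because $D$ is not big. For the converse, assume $\kappa_{\vol,\sup}(D)=n-1$, so in particular $\vol(D)=0$. I would invoke the $C^1$-differentiability of the volume function on the big cone (Boucksom--Favre--Jonsson \cite{BFJ}) together with the fundamental theorem of calculus to write
$$
\vol(D+\varepsilon A)=\int_{0}^{\varepsilon} n\,\langle (D+sA)^{n-1}\rangle\cdot A\,ds.
$$
If $\nu_{\BDPP}(D)\leq n-2$, then $\langle D^{n-1}\rangle=0$ as a class in $N^{n-1}(X)$, and by the continuity of positive intersection products on the pseudoeffective cone the integrand tends to $0$ as $s\to 0^+$; L'H\^opital then gives $\vol(D+\varepsilon A)=o(\varepsilon)$, contradicting $\kappa_{\vol,\sup}(D)=n-1$. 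Hence $\nu_{\BDPP}(D)\geq n-1$, and combined with the upper bound from Proposition~\ref{prop:ineqnumdim} we obtain equality.

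Finally, the ``in particular'' clause follows by a short case analysis from the chain $\nu_{\BDPP}(D)\leq\kappa_{\sigma}(D),\kappa_{\vol}(D)\leq\kappa_{\vol,\sup}(D)\leq\kappa_{\nu}(D)$ of Proposition~\ref{prop:ineqnumdim}: whenever any of the three named dimensions equals $0$, $n-1$, or $n$, parts (1)--(3) pin both $\nu_{\BDPP}(D)$ and the corresponding upper endpoint (either $\kappa_{\nu}(D)$ or $\kappa_{\vol,\sup}(D)$) to the same value, squeezing $\kappa_{\sigma}(D)$ and $\kappa_{\vol}(D)$ to agree.
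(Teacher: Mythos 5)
Your proof is correct.  Parts (1), (3), and the ``in particular'' clause match the paper's argument essentially verbatim (the paper also invokes Nakayama's Proposition~V.2.22 and Proposition~\ref{prop:ineqnumdim} in the same way).  The genuinely different step is the converse implication in part (2), and your route is worth comparing to the paper's.

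The paper sets $f(t)=\vol_X(D+tA)/t$, computes $f'(t)$ via the BFJ derivative formula expressed in terms of the restricted volume $\vol_{X|H}$ for a general $H\in|A|$, invokes \cite[Lemma 6.3]{lehmann-nu} to get that $f$ is non-decreasing so that the hypothesis $\kappa_{\vol,\sup}(D)=n-1$ upgrades to an actual limit $\lim_{t\to0}f(t)>0$, and then uses BFJ Corollary C to identify this limit with $\vol_{X|H}^+(D)$, which gives $\kappa_{\vol,\res}(D)\geq n-1$ and hence $\nu_{\BDPP}(D)\geq n-1$ via Theorem~\ref{thm:main}.  You instead argue by contraposition: writing
$$
\vol_X(D+\varepsilon A)=\int_0^\varepsilon n\,\langle (D+sA)^{n-1}\rangle\cdot A\,ds
$$
and observing that if $\nu_{\BDPP}(D)\le n-2$ then $\langle D^{n-1}\rangle=0$, so that by the defining limit $\langle D^{n-1}\rangle=\lim_{s\to 0^+}\langle(D+sA)^{n-1}\rangle$ the integrand tends to $0$, which forces $\vol_X(D+\varepsilon A)=o(\varepsilon)$ and hence $\kappa_{\vol,\sup}(D)\le n-2$.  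Both arguments rest on the same BFJ differentiability, but your contrapositive sidesteps Lehmann's monotonicity lemma (since you establish a true limit, not merely a $\liminf$), avoids restricted volumes and $\kappa_{\vol,\res}$, and does not require Theorem~\ref{thm:main} as an input.  That makes your treatment of part (2) slightly more self-contained.  The L'H\^opital invocation is unnecessary window dressing --- the direct integral estimate you implicitly use (integrand $<\delta$ on $(0,\varepsilon_0)$ implies $\vol<\delta\varepsilon$) already gives $\vol_X(D+\varepsilon A)/\varepsilon\to 0$ --- but this is purely cosmetic.
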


\begin{proof}
It is easy to check that $\nu_{\BDPP}(D)=0 ~\Leftrightarrow~ P_{\sigma}(D) \equiv 0$ and $\nu_{\BDPP}(D)=n~\Leftrightarrow~ \text{$D$ is big}$. By \cite[Proposition V.2.22 (2)]{nakayama}, $P_{\sigma}(D) \equiv 0 ~\Leftrightarrow~ \kappa_{\nu}(D)=0$, and by definition, $\text{$D$ is big} ~\Leftrightarrow~ \kappa_{\nu}(D)=n$. Thus $(1), (3)$ hold.

\medskip

We now show $(2)$. If $\nu_{\BDPP}(D) = n-1$, then $D$ is not big and thus $\kappa_{\nu}(D) \leq n-1$. As $\nu_{\BDPP}(D) \leq \kappa_{\vol, \sup}(D) \leq \kappa_{\nu}(D)$ by Proposition \ref{prop:ineqnumdim}, we obtain $\kappa_{\vol, \sup}(D)=n-1$. For the converse, we assume that $\kappa_{\vol, \sup}(D)=n-1$. Fix a sufficiently positive ample $\Z$-divisor $A$, and take a general member $H \in |A|$. For $t> 0$, let
$$
f(t):=\frac{\vol_X(D+tA)}{t}.
$$
By differentiability of volume function (\cite[Theorem A]{BFJ}, \cite[Corollary C]{lm-nobody}), we have
$$
f'(t)=\frac{n\vol_{X|H}(D+tA)t - \vol_X(D+tA)}{t^2}.
$$
Note first that $f(t)$ is a non-decreasing function for $t > 0$ since $f'(t) \geq 0$ for all $t > 0$ by \cite[Lemma 6.3]{lehmann-nu}.
As $\kappa_{\vol, \sup}(D)=n-1$, we get $\displaystyle \lim_{t \to 0} f(t) > 0$. By \cite[Corollary C]{BFJ}, we have
$$
 \vol_{X|H}^+(D) = \lim_{t \to 0}f(t) >0,
$$
and hence, we obtain $\nu_{\BDPP}(D) = \kappa_{\vol, \res}(D) = n-1$. We finish the proof of $(2)$.
Now, the last assertion follows from Proposition \ref{prop:ineqnumdim}.
\end{proof}

\begin{remark}
By Lesieutre's example in \cite{lesieutre}, it is known that there is a smooth projective threefold $Y$ and a pseudoeffective divisor $E$ such that $\nu_{\BDPP}(E)=1$ and $\kappa_{\nu}(E)=2$. Thus, in the setting of Proposition \ref{prop:numdim=0,n-1,n}, $\kappa_{\nu}(D)=n-1$ does not imply $\nu_{\BDPP}(D)=n-1$.
\end{remark}

\section{Example of $\kappa_{\sigma}(D), \kappa_{\vol}(D) > \nu_{\BDPP}(D)$}\label{sec:example}
We give a proof of Theorem \ref{thm:notcoincide} as a part of Example \ref{ex:sigma,vol>bdpp} using the following additivity for $\nu_{\BDPP}$.

\begin{proposition}\label{prop:product}
Let $X_1, X_2$ be smooth projective varieties, $p_i \colon X_1 \times X_2 \to X_i$ be the projection to the $i$-th component for $i=1,2$, and $D_1, D_2$ be pseudoeffective $\R$-divisors on $X_1, X_2$, respectively. Then we have
$$
\nu_{\BDPP}(p_1^*D_1 + p_2^*D_2) = \nu_{\BDPP}(D_1)+\nu_{\BDPP}(D_2).
$$
\end{proposition}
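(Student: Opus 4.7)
The plan is to bracket $\nu_{\BDPP}(D)$, with $D := p_1^*D_1 + p_2^*D_2$, from both sides using Theorem~\ref{thm:main}, which identifies $\nu_{\BDPP}$ with $\kappa_{\Delta}$ and with $\kappa_{\vol, \sup, \bir}^{\R}$. Set $n_i := \dim X_i$, $k_i := \nu_{\BDPP}(D_i)$, $n := n_1 + n_2$, and fix ample $\Z$-divisors $A_i$ on $X_i$ so that $A := p_1^*A_1 + p_2^*A_2$ is ample on $X_1 \times X_2$. I would first prove $\nu_{\BDPP}(D) \geq k_1 + k_2$ using a product admissible flag, and then $\nu_{\BDPP}(D) \leq k_1 + k_2$ using product birational models.

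For the lower bound, I would choose admissible flags $Y^{(i)}_\bullet$ on $X_i$ achieving $\dim \oklim_{Y^{(i)}_\bullet}(D_i) = k_i$ and concatenate them into a product admissible flag $Y_\bullet$ on $X_1 \times X_2$ (cutting down the $X_1$ factor first, then the $X_2$ factor). Under the K\"unneth decomposition $H^0(X_1 \times X_2, m(p_1^*L_1 + p_2^*L_2)) = H^0(X_1, mL_1) \otimes H^0(X_2, mL_2)$, the valuation $\nu_{Y_\bullet}$ on a pure tensor splits as the pair $(\nu_{Y^{(1)}_\bullet}, \nu_{Y^{(2)}_\bullet})$, which yields the product formula $\okbd_{Y_\bullet}(p_1^*L_1 + p_2^*L_2) = \okbd_{Y^{(1)}_\bullet}(L_1) \times \okbd_{Y^{(2)}_\bullet}(L_2)$ for big $L_i$. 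Taking $L_i := D_i + \varepsilon A_i$ and letting $\varepsilon \to 0^+$ gives $\oklim_{Y_\bullet}(D) = \oklim_{Y^{(1)}_\bullet}(D_1) \times \oklim_{Y^{(2)}_\bullet}(D_2)$, so $\dim \oklim_{Y_\bullet}(D) = k_1 + k_2$ and $\nu_{\BDPP}(D) = \kappa_{\Delta}(D) \geq k_1 + k_2$.

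For the upper bound, I would bound the infimum over $\phi$ appearing in $\kappa_{\vol, \sup, \bir}^{\R}(D)$ from above by restricting to product birational morphisms $\phi = \phi_1 \times \phi_2 : \widetilde X_1 \times \widetilde X_2 \to X_1 \times X_2$, with induced projections $\widetilde p_i$. The key identity to verify is
\[
P_\sigma(\phi^*D) + \varepsilon \phi^*A = \widetilde p_1^*\bigl(P_\sigma(\phi_1^*D_1) + \varepsilon \phi_1^*A_1\bigr) + \widetilde p_2^*\bigl(P_\sigma(\phi_2^*D_2) + \varepsilon \phi_2^*A_2\bigr),
\]
which I would prove by computing $\ord_E(\|\phi^*D + \varepsilon \phi^*A\|)$ at each prime divisor $E$ on $\widetilde X_1 \times \widetilde X_2$: for $E = \widetilde p_i^*F$, restricting a near-minimal effective representative to a general fiber of $\widetilde p_{3-i}$ identifies it with $\ord_F(\|\phi_i^*D_i + \varepsilon \phi_i^*A_i\|)$, while for $E$ dominating both factors the split representatives $\widetilde p_1^*D_1' + \widetilde p_2^*D_2'$ have order zero along $E$. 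Combined with the classical formula $\vol(\widetilde p_1^*B_1 + \widetilde p_2^*B_2) = \binom{n}{n_1}\vol(B_1)\vol(B_2)$ for big $B_i$, the volume on any such product model factors as $\binom{n}{n_1}$ times the product of the corresponding volumes on the factors.

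To conclude, given $k > k_1 + k_2$ I pick $k_i' > k_i$ with $k_1' + k_2' < k$. Since $\kappa_{\vol, \sup, \bir}^{\R}(D_i) = k_i < k_i'$ by Theorem~\ref{thm:main}, the quantity $\inf_{\phi_i} \vol(P_\sigma(\phi_i^*D_i) + \varepsilon \phi_i^*A_i)/\varepsilon^{n_i - k_i'}$ tends to $0$ as $\varepsilon \to 0^+$. Writing $\varepsilon^{n-k} = \varepsilon^{n_1 - k_1'}\,\varepsilon^{n_2 - k_2'}\,\varepsilon^{k - k_1' - k_2'}$ and bounding $\inf_\phi$ from above by its product-model value, the ratio $\inf_\phi \vol(P_\sigma(\phi^*D) + \varepsilon \phi^*A)/\varepsilon^{n-k}$ is dominated by the product of two quantities tending to $0$ with the extra factor $\varepsilon^{k - k_1' - k_2'} \to 0$. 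Hence its $\limsup$ as $\varepsilon \to 0^+$ equals $0$, which gives $\nu_{\BDPP}(D) = \kappa_{\vol, \sup, \bir}^{\R}(D) \leq k_1 + k_2$. The genuinely delicate step is the decomposition of $N_\sigma$ on a product under pullback from the factors; everything else is direct bookkeeping built on Theorem~\ref{thm:main} and the standard K\"unneth / product-of-volumes identities.
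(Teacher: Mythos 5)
Your proposal is correct in its structure and conclusion, but it follows a genuinely different path from the paper, and there is a small but visible algebra slip at the end. For the lower bound, you work through $\kappa_{\Delta}$ and the product formula $\oklim_{Y_\bullet}(p_1^*D_1 + p_2^*D_2) = \oklim_{Y_\bullet^{(1)}}(D_1) \times \oklim_{Y_\bullet^{(2)}}(D_2)$ for product flags; the paper instead uses $\nu_{\BDPP} = \kappa_{\vol, \res}$ directly, picking subvarieties $W_i \subseteq X_i$ with $\vol^+_{X_i|W_i}(D_i) > 0$ and invoking the one-line product formula $\vol^+_{X_1 \times X_2 | W_1 \times W_2}(D) = \vol^+_{X_1|W_1}(D_1) \cdot \vol^+_{X_2|W_2}(D_2)$. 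These are comparable in difficulty; both need a multiplicativity statement for a positivity invariant on products, and your Okounkov-body route is a fine alternative. For the upper bound the divergence is larger. You prove and exploit the identity $N_\sigma(\widetilde p_1^* L_1 + \widetilde p_2^* L_2) = \widetilde p_1^* N_\sigma(L_1) + \widetilde p_2^* N_\sigma(L_2)$ on product models and then run the estimate through $\kappa_{\vol, \sup, \bir}^{\R}$. The paper sidesteps the $N_\sigma$ decomposition entirely: it argues by contradiction, takes a $(k_1+k_2+1)$-dimensional complete-intersection subvariety $W$ with $\vol^+_{X_1 \times X_2|W}(D) > 0$, invokes Lehmann's Fujita-type approximation (\cite[Proposition 3.8]{lehmann-nu}) separately on each factor to produce nef and big $M_{i,m}$ on $X_{i,m}$, forms the product model, and then kills every monomial $(p_{1,m}^*M_{1,m})^j (p_{2,m}^*M_{2,m})^{k_1+k_2+1-j} \cdot W_m$ using $\lim_m M_{i,m}^{k_i+1} = 0$ together with the pigeonhole observation that either $j \geq k_1 + 1$ or $k_1 + k_2 + 1 - j \geq k_2 + 1$. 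The paper's device trades the delicate $N_\sigma$ product decomposition for the tautological fact that $p_{1,m}^* M_{1,m} + p_{2,m}^* M_{2,m}$ is nef, at the cost of needing the complete-intersection reduction for $W$; your device carries more technical weight in the $N_\sigma$ step but avoids the contradiction framework and works uniformly in $\varepsilon$.

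Two points to tighten. First, the exponent bookkeeping at the end is off: with $k_1' + k_2' < k$, the correct identity is $\varepsilon^{n_1 - k_1'} \varepsilon^{n_2 - k_2'} = \varepsilon^{n - k} \cdot \varepsilon^{k - k_1' - k_2'}$, not $\varepsilon^{n-k} = \varepsilon^{n_1-k_1'}\varepsilon^{n_2-k_2'}\varepsilon^{k-k_1'-k_2'}$; as written your factorization of $\varepsilon^{n-k}$ has the wrong total exponent. The conclusion survives because $k - k_1' - k_2' > 0$ still produces an extra $\varepsilon^{k-k_1'-k_2'} \to 0$ factor once the algebra is corrected. Second, your $N_\sigma$ decomposition on products is true but its proof deserves more care than the sketch suggests: for $E = \widetilde p_1^* F$ one must restrict an arbitrary effective numerical representative $D' \equiv \widetilde p_1^* L_1 + \widetilde p_2^* L_2$ to a general fiber $\widetilde X_1 \times \{x_2\}$, check that the residual part $D' - \ord_{\widetilde p_1^*F}(D') \cdot \widetilde p_1^*F$ does not acquire $F$ as a component after restriction for general $x_2$, and deduce $\ord_F(\| L_1 \|) \leq \ord_{\widetilde p_1^*F}(D')$; and for $E$ not a pullback from either factor one uses that split representatives are supported on pullback divisors. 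You flag this as the delicate step, and it is; the paper's choice to route around it via nef Fujita approximants is the main methodological difference.
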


\begin{proof}
Let $k_1:=\nu_{\BDPP}(D_1)$ and $k_2:=\nu_{\BDPP}(D_2)$. For each $i=1,2$, there is a $k_i$-dimensional subvariety $W_i \subseteq X_i$ such that $\vol_{X_i|W_i}^+(D_i) > 0$. Note that
$$
\vol_{X_1 \times X_2 | W_1 \times W_2}^+(p_1^*D_1 + p_2^*D_2) = \vol_{X_1|W_1}^+(D_1) \cdot \vol_{X_2|W_2}^+(D_2).
$$
Thus $\nu_{\BDPP}(p_1^*D_1 + p_2^*D_2) \geq k_1 + k_2$.
To derive a contradiction, we now suppose that $\nu_{\BDPP}(p_1^*D_1 + p_2^*D_2) >  k_1+k_2$. Then there is a $(k_1+k_2+1)$-dimensional subvariety $W \subseteq X_1 \times X_2$ such that $\vol_{X_1 \times X_2 | W}^+(p_1^*D_1 + p_2^*D_2) > 0$. We may assume that $W$ is a complete intersection of sufficiently positive very ample divisors (see \cite[Proposition 2.22]{CHPW}). For each $i=1,2$, by \cite[Proposition 3.8]{lehmann-nu}, there are birational maps
$$
\phi_{i,m} \colon X_{i,m} \to X_i
$$
centered in $\bm(D_i)$ for every integer $m \geq 1$, nef and big divisors $M_{i,m}$ on $X_{i,m}$, an ample $\Z$-divisor $A_i$ on $X_i$, and an effective divisor $G_i$ on $X_i$ such that
$$
M_{i,m} \leq \frac{1}{m} P_{\sigma}(\phi_{i,m}^*(\lceil mD_i \rceil + A_i)) \leq M_{i,m} + \frac{1}{m} \phi_{i,m}^*G_i.
$$
Let
$$
\phi_m :=\phi_{1,m} \times \phi_{2,m} \colon X_{1,m} \times X_{2,m} \to X_1 \times X_2
$$
with the projection $p_{i,m} \colon X_{1,m} \times X_{2,m} \to X_{i,m}$ for each $i=1,2$, and $W_m\subseteq X_{1,m}\times X_{2,m}$ be the strict transform of $W\subseteq X_1\times X_2$ under $\phi_m$.
Notice that
$$
\begin{array}{l}
\displaystyle  \vol_{X_{1,m}\times X_{2,m}|W_m}\left( p_{1,m}^*\left(M_{1,m}+\frac{1}{m}\phi_{1,m}^*G_1 \right) + p_{2,m}^*\left( M_{2,m}+\frac{1}{m}\phi_{2,m}^*G_2 \right) \right)\\
\displaystyle \geq \vol_{X_1 \times X_2|W}\left(p_1^*\left( \frac{\lceil mD_1 \rceil}{m} + \frac{1}{m}A_1 \right)  + p_2^*\left(\frac{\lceil mD_2 \rceil}{m} + \frac{1}{m}A_2 \right)\right).\\
 \end{array}
$$
Since we have
\begin{footnotesize}
$$
\displaystyle  \lim_{m \to \infty} \vol_{X_1 \times X_2|W}\left(p_1^*\left( \frac{\lceil mD_1 \rceil}{m} + \frac{1}{m}A_1 \right)  + p_2^*\left(\frac{\lceil mD_2 \rceil}{m} + \frac{1}{m}A_2 \right)\right)= \vol_{X_1 \times X_2|W}(p_1^*D_1 + p_2^*D_2) > 0,
$$
\end{footnotesize}\\[-10pt]
it follows that
$$
\lim_{m \to \infty} \vol_{X_{1,m}\times X_{2,m}|W_m}(p_{1,m}^*M_{1,m} + p_{2,m}^*M_{2,m}) > 0.
$$
On the other hand, we have
$$
\begin{array}{l}
 \displaystyle\vol_{X_{1,m}\times X_{2,m}|W_m}(p_{1,m}^*M_{1,m} + p_{2,m}^*M_{2,m})= (p_{1,m}^*M_{1,m} + p_{2,m}^*M_{2,m})^{k_1+k_2+1} \cdot W_m\\
~~= \displaystyle \sum_{j=0}^{k_1+k_2+1} {k_1+k_2+1 \choose j} (p_{1,m}^*M_{1,m})^j \cdot (p_{2,m}^*M_{2,m})^{k_1+k_2+1-j} \cdot W_m.
\end{array}
$$
Note that $\displaystyle \lim_{m \to \infty} M_{1,m}^{k_1+1} = 0$ and $\displaystyle \lim_{m \to \infty} M_{2,m}^{k_2+1} = 0$.
For each $0 \leq j \leq k_1+k_2+1$, either $j \geq k_1+1$ or $k_1+k_2+1-j \geq k_2+1$.
Thus
$$
\lim_{m \to \infty} (p_{1,m}^*M_{1,m})^j \cdot (p_{2,m}^*M_{2,m})^{k_1+k_2+1-j} \cdot W_m = 0~~\text{for every $0 \leq j \leq k_1 + k_2 + 1$},
$$
which implies the following contradiction:
$$
\lim_{m \to \infty} \vol_{X_{1,m}\times X_{2,m}|W_m}(p_{1,m}^*M_{1,m} + p_{2,m}^*M_{2,m}) = 0.
$$
Therefore, $\nu_{\BDPP}(p_1^*D_1 + p_2^*D_2) = k_1 + k_2$.
\end{proof}

We give a proof of Theorem \ref{thm:notcoincide} based on the example in \cite{lesieutre}.

\begin{example}\label{ex:sigma,vol>bdpp}
In \cite{lesieutre}, Lesieutre proved that there exist a smooth projective threefold $Y$ and a pseudoeffective divisor $E$ on $Y$ such that for any sufficiently positive ample $\Z$-divisor $A$ on $X$, there are constants $C_1, C_2, C_1', C_2' > 0$ satisfying
$$
C_1 m^{\frac{3}{2}} < h^0(X, \lfloor mD \rfloor + A) < C_2 m^{\frac{3}{2}}~~\text{ and }~~C_1' \varepsilon^{\frac{3}{2}} < \vol_X (D+ \varepsilon A) < C_2' \varepsilon^{\frac{3}{2}}
$$
for any sufficiently large integer $m>0$ and any sufficiently small number $\varepsilon>0$. Then
$$
\nu_{\BDPP}(E)=1~~\text{and}~~ \kappa_{\sigma, \inf}^{\R}(E)=\kappa_{\sigma, \sup}^{\R}(E)=\kappa_{\vol, \inf}^{\R}(E)=\kappa_{\vol, \sup}^{\R}(E)=\frac{3}{2}.
$$
For any integer $k \geq 1$, let
$$
X:=\underbrace{Y \times \cdots \times Y}_{2k~\text{times}},
$$
and $p_i \colon X \to Y$ be the projection of $i$-th component $Y$ to itself for $1 \leq i \leq 2k$. Let
$$
D:=p_1^*E+ \cdots + p_{2k}^*E.
$$
The following can be checked easily:
$$
\begin{array}{l}
h^0(X, \lfloor mD \rfloor+p_1^*A + \cdots + p_{2k}^* A) = h^0(Y, \lfloor mE \rfloor+A)^{2k}~~\text{for any integer $m \geq 1$},\\
\vol_X(D + \varepsilon(p_1^*A + \cdots + p_{2k}^*A)) = \vol_Y(E+\varepsilon A)^{2k}~~\text{for any number $\varepsilon > 0$}.
\end{array}
$$
Then it follows that
$$
\kappa_{\star, \diamond}^{\R}(D)=\kappa_{\star, \diamond}(D)= 3k~~\text{for all $\star=\sigma, \vol$ and $\diamond=\inf, \sup$}.
$$
In particular, $\kappa_{\vol}(D) = \kappa_{\sigma}(D) = 3k$.
On the other hand, by Proposition \ref{prop:product},
$$
\nu_{\BDPP}(D) = 2k.
$$
Hence $\kappa_{\vol}(D)-\nu_{\BDPP}(D) =\kappa_{\sigma}(D)-\nu_{\BDPP}(D) = k$.
This proves Theorem \ref{thm:notcoincide}.
\end{example}

\begin{remark}\label{rem:lehmann(7)<=(1)}
The proof of $(7) \leq (1)$ in \cite[Proof of Theorem 6.2]{lehmann-nu} actually proves  the inequality $(1) \leq (7)$, that is, $\nu_{\BDPP}(D) \leq \kappa_{\vol}(D)$.
\end{remark}

\begin{remark}
In the setting of Proposition \ref{prop:product}, it is easy to see that
\begin{footnotesize}
$$
\kappa_{\star, \inf}^{\R}(p_1^*D_1 + p_2^*D_2) \geq \kappa_{\star, \inf}^{\R}(D_1) +\kappa_{\star, \inf}^{\R}(D_2)~~\text{and}~~\kappa_{\star, \sup}^{\R}(p_1^*D_1 + p_2^*D_2) \leq \kappa_{\star, \sup}^{\R}(D_1) +\kappa_{\star, \sup}^{\R}(D_2)
$$
\end{footnotesize}\\[-16pt]
where $\star = \sigma$ or $\vol$. It is unclear whether the inequalities are actually the equalities.
On the other hand, the inequalities can be strict without the $\R$-value variations; especially, the following additivity properties
$$
\kappa_{\vol}(p_1^*D_1 + p_2^*D_2) = \kappa_{\vol}(D_1) + \kappa_{\vol}(D_2)~~\text{and}~~ \kappa_{\sigma}(p_1^*D_1 + p_2^*D_2) = \kappa_{\sigma}(D_1) + \kappa_{\sigma}(D_2)
$$
do not hold.
Example \ref{ex:sigma,vol>bdpp} with $k=2$ gives a counterexample.
\end{remark}

\section{Abundant divisors}\label{sec:abdiv}

In this section, we prove Theorems \ref{thm:abundantdivisor} and \ref{thm:abconj<=>goodmin}.

\begin{proof}[Proof of Theorem \ref{thm:abundantdivisor}]
The `if' direction is clear, so we only have to show the `only if' direction.
By replacing $X$ by a sufficiently higher birational model of $X$, we may assume that there is a morphism $f \colon X \to Z$, which is the Iitaka fibration of $D$, such that  $Z$ is a smooth projective variety.
Let $n:=\dim X$, and $k:=\kappa(D)=\nu_{\BDPP}(D)=\dim Z \geq 1$.
If $D$ is big, then there is nothing to prove. Thus we can assume that $1 \leq k \leq n-1$.
By \cite[Corollary V.2.26]{nakayama} (see also \cite[Theorem 5.7]{lehmann-red}), it is sufficient to show that there is a birational morphism $\mu \colon W \to X$ from a smooth projective variety $W$ such that $\kappa_{\sigma}(P_{\sigma}(\mu^*D)|_{F'}) = 0$, where $F'$ is a general fiber of the surjective morphism $g:=f \circ \mu \colon W \to Z$.

\medskip

Let $F$ be a general fiber of $f$, and $C \subseteq F$ be a general smooth curve, which is a complete intersection of very ample divisors on $F$. We first show that $\vol_{X|C}^+(D)=0$.
Choose an admissible flag $Z_\bullet$ on $Z$ centered at a general point $f(F)$ of $Z$ and an admissible flag $F_\bullet$ on $F$ containing $C$.
Let $Y_\bullet$ be a fiber-type admissible flag on $X$ associated to $Z_\bullet$ and $F_\bullet$ (see \cite{CJPW}) so that
$$
Y_i = \begin{cases} f^{-1}(Z_i) & \text{if $0 \leq i \leq k$} \\ F_{i-k} & \text{if $k+1 \leq i \leq n$}. \end{cases}
$$
Since $\kappa_{\Delta}(D)=\kappa_{\BDPP}(D)=k$, we have
$$
\oklim_{Y_\bullet}(D) \subseteq \R_{\geq 0}^k \times \{ 0 \}^{n-k}.
$$
Take a sufficiently positive ample $\Z$-divisor $A$ on $X$ and an arbitrary number $\varepsilon > 0$.
By \cite[Theorem 1.1]{CPW}, we have
$$
\vol_{X|C}(D+\varepsilon A) = \vol_{\R}(\okbd_{Y_\bullet}(D+\varepsilon A)_{x_1=\cdots = x_{n-1}=0}).
$$
By taking $\varepsilon \to 0$, we see that
\begin{equation}\label{eq:vol_{X|C}^+(D)=0}
\vol_{X|C}^+(D) =  \vol_{\R}(\oklim_{Y_\bullet}(D)_{x_1=\cdots =x_{n-1}=0}) = 0.
\end{equation}

\medskip

Now, by \cite[Proposition 3.8]{lehmann-nu}, for each integer $m \geq 1$, there are a birational morphisms $\phi_m \colon X_m \to X$ centered at $\bm(D)$, an effective divisor $G$ on $X$, and a nef and big divisor $M_m$ on $X_m$ such that
$$
M_m \leq \frac{1}{m} P_{\sigma}(\phi_m^*(\lceil mD \rceil+ A)) \leq M_m + \frac{1}{m} \phi_m^*G.
$$
We may assume that $F_m:=(f \circ \phi_m)^{-1}(f(F))$ is a general fiber of $f \circ \phi_m \colon X_m \to Z$ and (by abuse of notation) $\phi_m \colon F_m \to F$ is a birational morphism. We denote by $C_m$ the strict transform of $C$ by $\phi_m$.
We have
$$
M_m \cdot C_m = \vol_{X_m|C_m}(M_m) \leq \vol_{X_m|C_m} \left( \phi_m^* \left( \frac{\lceil mD \rceil}{m}  + \frac{1}{m}A \right) \right) = \vol_{X|C}\left(\frac{\lceil mD \rceil}{m}  + \frac{1}{m}A \right).
$$
Since $\displaystyle \lim_{m \to \infty} \vol_{X|C}\left(\frac{\lceil mD \rceil}{m}  + \frac{1}{m}A \right) = \vol_{X|C}^+(D) = 0$ by (\ref{eq:vol_{X|C}^+(D)=0}), we get $\displaystyle \lim_{m \to \infty} M_m \cdot C_m = 0$.
As $\displaystyle \lim_{m \to \infty} \frac{1}{m} \phi_m^* G \cdot C_m =\lim_{m \to \infty} \frac{1}{m}G \cdot C = 0$, we have $\displaystyle \lim_{m \to \infty}P_{\sigma}\left(\phi^*_m D + \frac{1}{m}A \right) \cdot C_m = 0$,
and hence, we obtain
\begin{equation}\label{eq:P.C_m}
\lim_{m \to \infty}P_{\sigma}(\phi_m^*D) \cdot C_m = 0.
\end{equation}

\medskip

Let $D=P+N$ be the divisorial Zariski decomposition. Since $\dim \bm(P) \leq n-2$ and $F$ is a general fiber of $f$, we have
$\dim (\bm(P)\cap F) \leq \dim F - 2$. Then
$$
C \cap \bm(P) = C \cap (\bm(P) \cap F) = \emptyset
$$
because $C \subseteq F$ is general. Note that $P_{\sigma}(\phi_m^*D) = P_{\sigma}(\phi_m^*P)$ and $\phi_m^*(P|_F) = (\phi_m^*P)|_{F_m}$. Since $0 \leq P_{\sigma}((\phi_m^*P)|_{F_m}) - P_{\sigma}(\phi_m^*P)|_{F_m} \leq N_{\sigma}(\phi_m^*P)|_{F_m}$ and $\phi_m(N_{\sigma}(\phi_m^*P)) \subseteq \bm(P)$, it follows that
$$
P_{\sigma}((\phi_m^*P)|_{F_m}) \cdot C_m = P_{\sigma}(\phi_m^*P)|_{F_m}  \cdot C_m = P_{\sigma}(\phi_m^*P) \cdot C_m = P_{\sigma}(\phi_m^*D) \cdot C_m.
$$
Then by (\ref{eq:P.C_m}), we obtain
\begin{equation}\label{eq:P.C_m2}
\lim_{m \to \infty} P_{\sigma}((\phi_m^*P)|_{F_m}) \cdot C_m  = 0.
\end{equation}

\medskip

Suppose to the contrary that for any birational morphism  $\mu \colon W \to X$ from a smooth projective variety $W$, we have $\kappa_{\sigma}(P_{\sigma}(\mu^*D)|_{F'}) > 0$, where $F'$ is a general fiber of the surjective morphism $g:=f \circ \mu  \colon W  \to Z$. Recall that $P_{\sigma}(\phi_m^*D) = P_{\sigma}(\phi_m^*P)$. By Proposition \ref{prop:numdim=0,n-1,n}, we have $\kappa_{\vol,\zar}(P_{\sigma}(\phi_m^*P)|_{F_m})=\nu_{\BDPP}(P_{\sigma}(\phi_m^*P)|_{F_m}) > 0$. Thus
$$
\lim_{m \to \infty} P_{\sigma}((\phi_m^*P)|_{F_m}) \cdot C_m = \lim_{m \to \infty}\vol_{C_m}(P_{\sigma}((\phi_m^*P)|_{C_m})) >0,
$$
which is a contradiction to (\ref{eq:P.C_m2}). We complete the proof.
\end{proof}

Before giving the proof of Theorem \ref{thm:abconj<=>goodmin}, we recall the definitions of a klt pair and good minimal model. A log pair $(X, \Delta)$ is called \emph{klt} if $K_X+\Delta$ is $\Q$-Cartier and the discrepancy $a(E; X, \Delta) > -1$ for every prime divisor $E$ over $X$. We say that a $\Q$-factorial projective klt pair $(X, \Delta)$ has a \emph{good minimal model} $(X', \varphi_* \Delta)$ if there is a $(K_X+\Delta)$-negative birational map $\varphi \colon X \dashrightarrow X'$ to another $\Q$-factorial normal projective variety $X'$ such that$(X',\varphi_*\Delta)$ is klt and  $K_{X'}+\varphi_* \Delta$ is semiample. For more details on these definitions, see \cite{KM}.

\begin{proof}[Proof of Theorem \ref{thm:abconj<=>goodmin}]
Assume that $\kappa(K_X + \Delta) = \nu_{\BDPP}(K_X+\Delta) \geq 0$.
Let $g \colon W \to Z$ be a birational model of the Iitaka fibration of $K_X+\Delta$ such that $W, Z$ are smooth projective varieties and there is a birational morphism $\mu \colon W \to X$.
We write
$$
K_W + \Gamma = \mu^*(K_X + \Delta) + \sum a_i E_i,
$$
where $\Gamma$ is an effective divisor, each $E_i$ is a prime $\mu$-exceptional divisor, and $a_i > 0$ for every $i$. We may assume that $\Gamma$ has a simple normal crossing support and thus $(W, \Gamma)$ is a klt pair. Note that $P_{\sigma}(\mu^*(K_X + \Delta)) = P_{\sigma}(K_W + \Gamma)$. We claim that
\begin{equation}\label{eq:claimabconj<=>goodmin}
\nu_{\BDPP}(K_F + \Gamma|_F) = 0,
\end{equation}
where $F$ is a general fiber of $g$.
Granting the claim, we get $\kappa_{\sigma}(K_F + \Gamma|_F)=0$ by Proposition \ref{prop:numdim=0,n-1,n}. Then \cite[Corollary V.4.9]{nakayama} (see also \cite[Theorem 4.2]{GL}) implies that $(F, \Gamma|_F)$ has a good minimal model. By \cite[Theorem 4.4]{Lai}, $(W, \Gamma)$ has a good minimal model, and so does $(X, \Delta)$.

\medskip

We now prove the claim (\ref{eq:claimabconj<=>goodmin}).
There is an ample divisor $A_Z$ on $Z$ with $K_W + \Gamma \geq g^*A_Z$. Then
$$
\nu_{\BDPP}(K_X+\Delta) = \nu_{\BDPP}(K_W+\Gamma)=\nu_{\BDPP}(K_W + \Gamma+mg^*A_Z)~~\text{for any integer $m \geq 1$}.
$$
Let $D:=K_W + \Gamma+mg^*A_Z$ for some integer $m \geq 1$ such that $K_Z + mA_Z$ is ample.
Let $Z_\bullet$ be any admissible flag on $Z$, and $F_\bullet$ be an admissible flag on $F$ containing a positive volume subvariety of $K_F+\Gamma|_F$ such that $F_{\dim F}$ is a general point. Let $Y_\bullet$ be a fiber-type admissible flag on $X$ associated to $Z_\bullet$ and $F_\bullet$. Note that
$$
\okbd_{Z_\bullet}(mA_Z) \subseteq \oklim_{Y_\bullet}(D).
$$
In \cite[Proof of Theorem V.4.1]{nakayama}, Nakayama showed that there are ample divisors $A$ on $X$ and $H_Z$ on $Z$ such that $g_*\mathcal{O}_W(mD+A) \otimes \mathcal{O}_Z(H)$ is generically generated by global sections for any sufficiently divisible integer $m \geq 1$. This shows that
$$
\oklim_{F_\bullet}(K_F + \Gamma|_F) \subseteq \oklim_{Y_\bullet}(D)
$$
since $D|_F = K_F + \Gamma|_F$. Then we have
$$
\nu_{\BDPP}(D) \geq \dim \oklim_{Y_\bullet}(D) \geq \dim \okbd_{Z_\bullet}(mA_Z) + \dim \oklim_{F_\bullet}(K_F+\Gamma|_F).
$$
As $\nu_{\BDPP}(D) = \kappa(K_X+\Delta)=\dim Z=\dim \okbd_{Z_\bullet}(mA_Z) $, we obtain the claim  (\ref{eq:claimabconj<=>goodmin})
$$
\nu_{\BDPP}(K_F + \Gamma|_F)  = \dim \oklim_{F_\bullet}(K_F+\Gamma|_F) = 0.
$$

\medskip

Conversely, assume that $(X, \Delta)$ has a good minimal model $(X', \Delta')$. There is a morphism
$$
f \colon X' \to Z:=\Proj R(K_X+\Delta)
$$
such that $K_{X'}+\Delta' = f^*A$ for some ample $\Q$-divisor $A$ on $Z$. Then we have
$$
\nu_{\BDPP}(K_X+\Delta)=\nu_{\BDPP}(K_{X'}+\Delta')=\dim Z = \kappa(K_X+\Delta).
$$
We complete the proof.
\end{proof}


\end{document}